\newcommand{\algofont}[1]{{\scriptsize\textnormal{\textbf{#1}:}}}
\newtheorem{theorem}{Theorem}[section]
\newtheorem{proposition}[theorem]{Proposition}
\newtheorem{definition}[theorem]{Definition}
\newtheorem{lemma}[theorem]{Lemma}
\newtheorem{remark}[theorem]{Remark}
\def\QED{\mbox{\rule[0pt]{1.5ex}{1.5ex}}}
\newcommand{\Ti}{\mathbf{Ti}}
\newcommand{\QQ}{\mathcal{Q}}
\newcommand{\BB}{\mathcal{B}}
\newcommand{\VV}{\mathcal{V}}
\newcommand{\PP}{\mathcal{P}}
\newcommand{\X}{\mathcal{X}}
\newcommand{\s}{\mathcal{S}}
\newcommand{\env}{\mathcal{E}}
\newcommand{\p}{\mathbf{p}}
\newcommand{\q}{\mathbf{q}}
\newcommand{\w}{\mathbf{w}}
\newcommand{\z}{\mathbf{z}}
\newcommand{\real}{{\mathbb{R}}}
\renewcommand{\natural}{{\mathbb{N}}}
\newcommand{\subscr}[2]{{#1}_{\textup{#2}}}
\newcommand{\union}{\cup}
\newcommand{\intersection}{\cap}
\newcommand{\argmin}{\operatornamewithlimits{argmin}}
\newcommand{\sat}{\operatorname{sat}}
\newcommand{\map}[3]{#1: #2 \rightarrow #3}
\newcommand{\until}[1]{\{1,\dots,#1\}}
\newcommand{\norm}[1]{\|#1\|}
\newcommand{\abs}[1]{|#1|}
\newcommand\oprocendsymbol{\hbox{$\square$}}
\newcommand\oprocend{\relax\ifmmode\else\unskip\hfill\fi\oprocendsymbol}
\begin{document}
\title{On Vehicle Placement to Intercept Moving Targets\footnote{This
    material is based upon work supported in part by ARO MURI Award
    W911NF-05-1-0219 and ONR Award N00014-07-1-0721 and by the
    Institute for Collaborative Biotechnologies through the grant
    DAAD19-03-D-0004 from the U.S. Army Research Office. A preliminary
    version of this work titled ``Vehicle Placement to Intercept
    Moving Targets'' was presented at the 2010 American Control
    Conference, Baltimore, MD, USA.}}

\author{Shaunak D. Bopardikar\footnote{S. D. Bopardikar and F. Bullo are with
    the Center for Control, Dynamical Systems and Computation, University
    of California at Santa Barbara, Santa Barbara, CA 93106, USA; email:
    \{shaunak,bullo\}@engineering.ucsb.edu. S. L. Smith is with the
    Department of Electrical and Computer Engineering, University of
    Waterloo, Waterloo ON, N2L 3G1 Canada; email:
    stephen.smith@uwaterloo.ca }\and \qquad Stephen L. Smith \and \qquad %
  Francesco Bullo }

\date{}

\maketitle

\begin{abstract}
  We address optimal placement of vehicles with simple motion to
  intercept a mobile target that arrives stochastically on a line
  segment. The optimality of vehicle placement is measured through a
  cost function associated with intercepting the target. With a single
  vehicle, we assume that the target moves (i) with fixed speed and in
  a fixed direction perpendicular to the line segment, or (ii) to maximize
  the distance from the line segment, or (iii) to maximize intercept time. In each
  case, we show that the cost function is strictly convex, its
  gradient is smooth, and the optimal vehicle placement is obtained by
  a standard gradient-based optimization technique. With multiple
  vehicles, we assume that the target moves with fixed speed and in a
  fixed direction perpendicular to the line segment. We present a discrete
  time partitioning and gradient-based algorithm, and characterize
  conditions under which the algorithm asymptotically leads the
  vehicles to a set of critical configurations of the cost function.
\end{abstract}

\section{Introduction}
Vehicle placement to provide optimal coverage has received lot of
recent attention. This paper addresses vehicle placement scenarios
with the novelty of intercepting a mobile target generated randomly on
a segment. Applications of this work are envisioned in border patrol
wherein unmanned vehicles are placed to optimally intercept moving
targets that cross a region under surveillance
(cf.~\cite{ARG-ASH-JKH:04, RS-MK-KL-DC:08}).

Vehicle placement problems are analogous to geometric location
problems, wherein given a set of static points, the goal is to find
supply locations that minimize a cost function of the distance from
each point to its nearest supply location (cf.~\cite{EZ:85}). For a
single vehicle, the expected distance to a point that is randomly
generated via a probability density function, is given by the
continuous $1$--median function.  The $1$--median function is 
minimized by a point termed as the \emph{median}
(cf.~\cite{SPF-JSBM-KB:05}). For multiple distinct vehicle locations,
the expected distance between a randomly generated point and one of
the locations is known as the continuous multi-median function
(cf.~\cite{ZD:95}). For more than one location, the multi-median
function is non-convex, and thus determining locations that minimize
the multi-median function is hard in the general
case.~\cite{JC-SM-TK-FB:02j} addressed a distributed version of a
partition and gradient based procedure, known as the Lloyd algorithm,
for deploying multiple robots in a region to optimize a multi-median
cost function.~\cite{MS-DR-JJS:08} provided an adaptive control law to
enable robots to approximate the density function from sensor
measurements.~\cite{SM-FB:04p} presented motion coordination
algorithms to steer a mobile sensor network to an optimal
placement.~\cite{AK-SM:10} presented a coverage algorithm for vehicles
in a river environment. Related forms of the cost function have also
appeared in disciplines such as vector quantization, signal processing
and numerical integration (cf.~\cite{RMG-DLN:98, QD-VF-MG:99}).

In mobile target scenarios, the cost for the vehicle is a function of
relative locations, speeds and motion constraints considered. For an
adversarial target, the optimal vehicle motion is obtained by solving
a min-max pursuit-evasion game, in which the target seeks to maximize
while the vehicle seeks to minimize a certain cost function. The
vehicle strategy is a version of the classic proportional navigation
guidance law (cf.~\cite{MG:71}). With constraints such as a wall in
the playing space or non-zero capture distance, strategies with
optimal intercept time have been derived in~\cite{RI:65} and
in~\cite{MP:87}.

We consider a line segment on which a mobile target is generated via a
known spatial probability density and one or multiple vehicles seek to
intercept it. Knowledge about the density is a standard assumption in
search problems (cf.~\cite{LDS:75}). The goal is to determine vehicle
placements that minimize a cost function associated with the target
motion. With a single vehicle, we consider a class of cost functions
and establish its convexity, its smoothness and the existence of a
unique global minimizer. We show that the cost functions associated
with the target moving with fixed speed and in a fixed direction
perpendicular to the line segment, and with the target seeking to
maximize the distance from the segment, fall in the class of cost
functions that we have analyzed. The cost function for target motion
that maximizes the intercept time is shown to be proportional to the
continuous $1$--median function. With multiple vehicles and the target
moving with fixed speed perpendicular to the line segment, we first
provide an algorithm to partition the line segment among the vehicles
and characterize its properties. With the expected intercept time as
the cost, we propose a Lloyd algorithm in which every vehicle computes
its partition and descends the gradient of the cost computed over its
partition. We characterize conditions under which the vehicles
asymptotically reach a set of critical configurations.

In~\cite{SDB-SLS-FB:08v}, we addressed optimal placement for a single
vehicle with uniformly generated targets that have fixed speed and
direction. This paper extends our work to
include non-uniform generation density, adversarial target motion, and
multiple vehicle scenario. Existing analyses of Lloyd algorithms
(cf.~\cite{JC-SM-TK-FB:02j, QD-VF-MG:99}) do not apply to this
formulation due to a different form of the cost function.

This paper is organized as follows. The problem is formulated in
Section~\ref{secn:prob}. Single vehicle scenarios are addressed in
Section~\ref{secn:single}. The multiple vehicle scenario is addressed
in Section~\ref{secn:multi}. The proofs of
Lemmas~\ref{lem:cvx},~\ref{lem:grad_exptime} and~\ref{lem:unstable}
are presented in the Appendix.

\section{Problem Statement}\label{secn:prob}

We consider vehicles modeled with single integrator dynamics having
unit speed. A target is generated at a random position $(x,0)$ on the
segment $G:=[0,W]\times\{0\}$, termed the \emph{generator}, via a
specified probability density function $\map{\phi}{[0,W]}{\real_{\geq
    0}}$.~We assume that the density $\phi$ is bounded, i.e., there
exists an $M>0$ such that $\phi(x)\leq M, \forall x\in[0,W]$. The
target moves with bounded speed $v<1$, and is intercepted or captured
if a vehicle and the target are at the same point. We assume that the
vehicles can sense the instantaneous position and velocity vector of
the target. Target velocity information may be obtained using
Doppler-based methods. The goal is to determine vehicle placements and
corresponding capture motions that minimize a certain cost function
based on the maneuvering abilities of the target. We consider the
following cases.
\subsection{Single Vehicle Case}\label{secn:probsingle}
We determine a location $\p\in \real\times\real_{\geq 0}$ that
minimizes $\map{\subscr{C}{exp}}{\real \times \real_{\geq 0}}{\real}$ given by
\begin{equation}\label{eq:cost}
\subscr{C}{exp}(\p):= \int_0^W C(\p,x)\phi(x)dx, 
\end{equation}
where $\map{C}{\real\times\real_{\geq 0}\times[0,W]}{\real_{\geq 0}}$
is an appropriately defined cost of the vehicle position $\p$. In what
follows, we consider the following target motions.

(i) \emph{Constrained target:} We assume that the target
is \emph{constrained} to move in the positive $Y$-direction with fixed
speed $v<1$. From~\cite{SDB-SLS-FB:08v}, the cost function $C$ is
\begin{equation}\label{eq:time}
 T(\p,x) = \frac{\sqrt{(1-v^2)(X-x)^2 + Y^2}}{1-v^2} - 
    \frac{vY}{1-v^2},
\end{equation}
where the quantity $1$ arises from normalizing the vehicle speed to
unity, $\p := (X,Y)$ and $T$ is the time taken for the vehicle to
intercept the constrained target.

(ii) \emph{Adversarial target:} We consider a differential
pursuit evasion game in which the target (evader) seeks to maximize
and the vehicle (pursuer) seeks to minimize any one of the following
cost functions.

(a) \emph{Expected vertical height:} The cost function $C$ is the
vertical height $H(\p,x)$, i.e., the distance of the target from the
generator in the positive $Y$ direction, when the target is intercepted
(cf. Figure~\ref{fig:problem}).

(b) \emph{Expected intercept time}: The cost function $C$ is the time
interval $\Ti(\p,x)$ before the target is intercepted
(cf. Figure~\ref{fig:problem}). In this formulation, we also assume
that the target does not go below the X-axis.

The motions of the target and the vehicle are obtained from the
solution of these differential games and will be addressed, along with
formulae for $H$ and $\Ti$, in Section~\ref{secn:optadv}. 

\begin{figure}[!h]
\centering
%\resizebox{0.55\linewidth}{!}{\input{problem.tex}}
\includegraphics[width=0.4\linewidth]{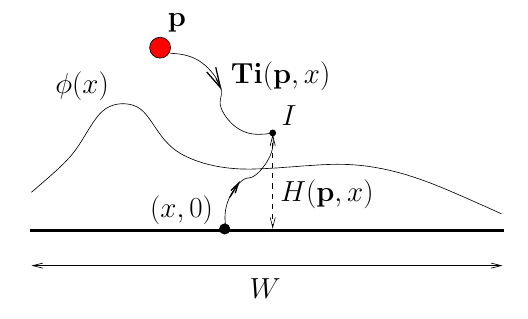}
\caption{Adversarial target scenario. The vertical height $H$ is the
  distance of the intercept point $I$ from the generator. Due to unit
  speed of the vehicle, the intercept time $\Ti$ is equal to the
  distance covered by $\p$ until the target is intercepted.}
\label{fig:problem}
\end{figure}

\subsection{Multiple Vehicles Case} \label{secn:probmulti}
We assume that the target translates in the positive $Y$-direction with
speed $v<1$. As shown in Figure~\ref{fig:multiprob}, given $m\geq 2$
vehicles having complete communication, the goal is to determine vehicle
locations $\p_i \in [0,W]\times\real_{\geq 0}$, for every $i\in
\{1,\dots,m\}$, that minimize the expected constrained travel time given by
\begin{equation}\label{eq:multtime}
\subscr{T}{exp}(\p_1,\dots,\p_m) :=
\int_0^W\min_{i\in\{1,\dots,m\}}T(\p_i,x)\phi(x)dx,
\end{equation}
where $T(\p_i,x)$ is given by Eq.~\eqref{eq:time}. 

\begin{figure}[!h]
\centering
%\resizebox{0.55\linewidth}{!}{\input{multiprob.tex}}
\includegraphics[width=0.4\linewidth]{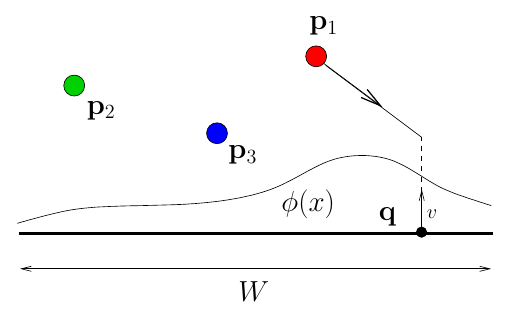}
\caption{Intercepting a target moving with fixed speed
  $v$ perpendicular to the generator.}
\label{fig:multiprob}
\end{figure}

\begin{remark}[Adversarial target scenarios] The adversarial target
  counterparts of the multiple vehicle case are difficult to analyze
  theoretically. Section~\ref{secn:adv} provides some insight into the scenarios.
\end{remark}

\section{Single Vehicle Scenarios} \label{secn:single}
In this section, we address optimal placement for a single vehicle in
the scenarios mentioned in Section~\ref{secn:probsingle}.

\subsection{Cost functions for Expected Constrained Travel Time and
  Expected Vertical Height}\label{secn:general}
We analyze cost functions given by Eq.~\eqref{eq:cost}, where the
function $C$ has the form
\begin{equation}\label{eq:C}
C(X,Y,x):= a\sqrt{b(X-x)^2 + Y^2} - cY,
\end{equation}
and $a$, $b$, and $c$ are positive constants, with $a>c$. The
constrained travel time $T$ (cf. Eq.~\eqref{eq:time}) has this form and we will show that the
vertical height $H$ also has this form in Section~\ref{secn:optadv}.

Using $C$ as defined in Eq.~\eqref{eq:C}, the partial derivatives
of $\subscr{C}{exp}(X,Y)$ from Eq.~\eqref{eq:cost} are given by
\begin{equation}\label{eq:partx}
\frac{\partial \subscr{C}{exp}}{\partial X} =
ab\int_0^W\frac{(X-x)\phi(x)}{\sqrt{b(X-x)^2 + Y^2}}dx,
\end{equation}
\begin{equation}\label{eq:party}
\frac{\partial \subscr{C}{exp}}{\partial Y} =
aY\int_0^W\frac{\phi(x)}{\sqrt{b(X-x)^2 + Y^2}}dx-c.
\end{equation}

We first establish strict convexity of $\subscr{C}{exp}$.

\begin{lemma}[Strict convexity of expected cost]\label{lem:cvx}
In the domain ${]0,W[}\times\real_{>0}$, the expected cost
$\subscr{C}{exp}(X,Y)$ 
\begin{enumerate}
\item is continuous and convex in
  $X$ and $Y$, and
\item has a unique minimizer.
\end{enumerate}
\end{lemma}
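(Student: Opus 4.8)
The plan is to establish convexity and continuity first, then upgrade to \emph{strict} convexity via a Hessian computation, and finally deduce existence and uniqueness of the minimizer. For the first claim, I would fix $x\in[0,W]$ and observe that the integrand $C(X,Y,x)=a\sqrt{b(X-x)^2+Y^2}-cY$ is convex in $(X,Y)$: the term $\sqrt{b(X-x)^2+Y^2}$ is the Euclidean norm of the affine map $(X,Y)\mapsto(\sqrt{b}\,(X-x),Y)$, hence convex, while $-cY$ is linear. Since $\phi\geq 0$, the expected cost $\subscr{C}{exp}(X,Y)=\int_0^W C(X,Y,x)\phi(x)\,dx$ is a nonnegatively weighted average of convex functions and is therefore convex; it is finite on the open domain because $x$ ranges over the compact set $[0,W]$ and $\phi$ is bounded. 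Continuity is then automatic, since a finite convex function on an open convex subset of $\real^2$ is continuous.

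To obtain a \emph{unique} minimizer I would sharpen convexity by computing the Hessian. For fixed $x$, writing $r=\sqrt{b(X-x)^2+Y^2}$, a direct calculation gives
\[
\nabla^2 C(X,Y,x)=\frac{ab}{r^{3}}\begin{pmatrix} Y^2 & -(X-x)Y\\ -(X-x)Y & (X-x)^2\end{pmatrix},
\]
which is positive semidefinite of rank one (its determinant vanishes). Differentiating \eqref{eq:partx}--\eqref{eq:party} once more under the integral sign---justified on the open domain because $Y$ is locally bounded away from $0$, so the second derivatives of the integrand are bounded uniformly in $x$---yields $\nabla^2\subscr{C}{exp}=\int_0^W \nabla^2 C\,\phi\,dx$. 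For any nonzero direction $(\alpha,\beta)$,
\[
(\alpha,\beta)\,\nabla^2\subscr{C}{exp}\,(\alpha,\beta)^\top
= ab\int_0^W \frac{\bigl(\alpha Y-\beta(X-x)\bigr)^2}{r^{3}}\,\phi(x)\,dx .
\]
The key point is that the factor $\bigl(\alpha Y-\beta(X-x)\bigr)^2$ vanishes at most at a single value of $x$, whereas $\phi$ is a probability density and is therefore strictly positive on a set of positive measure; the integral is thus strictly positive. Hence $\nabla^2\subscr{C}{exp}\succ0$ throughout the open domain, so $\subscr{C}{exp}$ is strictly convex, which already yields \emph{at most} one minimizer.

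It remains to show a minimizer \emph{exists} in the open domain, and this I expect to be the main obstacle, since $\subscr{C}{exp}$ does not blow up on three of the four boundary faces and coercivity is unavailable. I would instead argue from the first-order behavior on the boundary. As $Y\to\infty$ one has $C(X,Y,x)\sim (a-c)Y\to+\infty$ because $a>c$, so large $Y$ is excluded. As $Y\to 0^+$, dominated convergence (using $\tfrac{Y}{\sqrt{b(X-x)^2+Y^2}}\leq 1$ and pointwise decay) gives $\partial\subscr{C}{exp}/\partial Y\to -c<0$, so the minimizer cannot be approached along $Y=0$. At $X=0$ the integrand of $\partial\subscr{C}{exp}/\partial X$ is nonpositive and not identically zero, so $\partial\subscr{C}{exp}/\partial X<0$ there; symmetrically $\partial\subscr{C}{exp}/\partial X>0$ at $X=W$. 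Consequently, on a sufficiently large compact box $[x_0,x_1]\times[y_0,y_1]\subset\,]0,W[\,\times\real_{>0}$ the continuous function $\subscr{C}{exp}$ attains its minimum at an interior point, where the gradient vanishes; by convexity this critical point is the global minimizer, and by the strict convexity above it is unique. An equivalent route is to note that $\partial\subscr{C}{exp}/\partial Y=0$ determines a unique $Y=\psi(X)$ (as $\partial\subscr{C}{exp}/\partial Y$ increases from $-c$ to $a-c$ in $Y$), and then apply the intermediate value theorem to $X\mapsto\partial\subscr{C}{exp}/\partial X\,(X,\psi(X))$, whose sign is negative near $X=0$ and positive near $X=W$.
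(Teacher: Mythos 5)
Your proof is correct, and its skeleton matches the paper's: convexity of the integrand, existence via a boundary sign analysis (coercivity in $Y$ from $a>c$, negativity of $\partial \subscr{C}{exp}/\partial Y$ near $Y=0$, and the signs of $\partial \subscr{C}{exp}/\partial X$ at $X\leq 0$ and $X\geq W$), and uniqueness via positivity of the same quadratic form. The one genuine difference is how uniqueness is organized. You prove strict convexity up front: you integrate the rank-one Hessians of $C(\cdot,\cdot,x)$ (your Hessian formula is correct) and observe that their null directions vary with $x$ while $\phi>0$ on a set of positive measure, so the Hessian of $\subscr{C}{exp}$ is positive definite; at-most-one minimizer follows immediately and independently of the existence argument. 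The paper never actually proves strict convexity (despite the lemma's title): it supposes two minimizers $(X_1,Y_1)$, $(X_2,Y_2)$, uses convexity to make the whole segment between them critical, differentiates the first-order conditions in the interpolation parameter, and arrives at exactly your quadratic form, $\int_0^W f(X_2,Y_2,x)\bigl(Y_2(X_1-X_2)-(X_2-x)(Y_1-Y_2)\bigr)^2dx=0$, to force a contradiction. So the algebra is identical; your packaging is cleaner and justifies the word \emph{strict}, at the cost of justifying second-order differentiation under the integral sign (which you do, using that $Y$ is locally bounded away from zero). Two minor points of comparison: for the $Y\to 0^+$ limit the paper uses an explicit logarithmic bound from $\phi\leq M$, which is uniform over $X$ in compact sets, whereas your dominated-convergence argument is pointwise in $X$; relatedly, your compact-box existence argument tacitly needs the sign conditions to hold \emph{uniformly} on each face of the box (e.g., $\partial \subscr{C}{exp}/\partial Y<0$ on the entire bottom face, $\partial \subscr{C}{exp}/\partial X<0$ on the entire left face). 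That uniformity follows from continuity of the partials and compactness of the faces, so it is not a gap in substance, but it should be stated.
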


Statement (i) of Lemma~\ref{lem:cvx} involves showing that the Hessian
matrix of $C$ with respect to $X$ and $Y$ is positive
semi-definite. The existence of a minimizer is established by showing
that the partial derivatives of $C$ with respect to $X$ and $Y$ vanish
inside ${]0,W[}\times\real_{>0}$.  The uniqueness is established by
assuming two distinct minimizers, using the convexity of $C$ and the
necessary conditions for the two distinct locations to be minima to
reach a contradiction.

Lemma~\ref{lem:cvx} leads to the main result.

\begin{theorem}[Minimizing expected cost]\label{thm:opt}
  From any initial vehicle location in $\real\times \real_{>0}$ and for any
  pobability density function $\phi$, assume that the vehicle motion obeys
  \begin{equation}\label{eq:graddes}
    \dot{\p}(t) = - \frac{\partial \subscr{C}{exp}}{\partial \p}(\p(t)).
  \end{equation}
  The following statements hold:
  \begin{enumerate}
  \item the vehicle position $\p(t)$ remains in $[0,W]\times\real_{>0}$ at
    all times, and
  \item the vehicle position $\p(t)$ converges to the unique global
    minimizer of $\subscr{C}{exp}$.
  \end{enumerate}
\end{theorem}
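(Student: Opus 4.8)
The plan is to use $\subscr{C}{exp}$ itself as a Lyapunov function for the gradient flow \eqref{eq:graddes}, combining a boundary (invariance) analysis with LaSalle's invariance principle. As a preliminary, I note that on the open upper half-plane $\real\times\real_{>0}$ the integrands in \eqref{eq:partx}--\eqref{eq:party} are smooth in $(X,Y)$ and their denominators are bounded away from zero, so $\nabla\subscr{C}{exp}$ is well defined and locally Lipschitz there; hence \eqref{eq:graddes} admits a unique maximal solution through any initial point in $\real\times\real_{>0}$.

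For statement (i), I would verify that $-\nabla\subscr{C}{exp}$ does not point out of $[0,W]\times\real_{>0}$ on its boundary (a Nagumo-type argument). On the face $X=0$ every factor $(X-x)=-x$ is nonpositive, so by \eqref{eq:partx} we get $\dot X = -\,\partial\subscr{C}{exp}/\partial X \ge 0$; symmetrically, on $X=W$ one has $\dot X \le 0$. The delicate face is $Y\to 0^+$: here I would show, using \eqref{eq:party}, that $\dot Y = c - aY\int_0^W \phi(x)/\sqrt{b(X-x)^2+Y^2}\,dx \to c>0$, via the estimate $Y\int_0^W \phi(x)/\sqrt{b(X-x)^2+Y^2}\,dx \le MY\int_0^W dx/\sqrt{b(X-x)^2+Y^2}$ together with the fact that this last integral grows only like $\ln(1/Y)$, so the product vanishes. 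These three sign conditions make $[0,W]\times\real_{>0}$ positively invariant, and for an initial $X\notin[0,W]$ the same signs drive $X$ monotonically into the strip while $Y$ stays positive, which establishes (i).

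For statement (ii), I would take $\subscr{C}{exp}$ as the Lyapunov candidate: along \eqref{eq:graddes}, $\tfrac{d}{dt}\subscr{C}{exp}(\p(t)) = \nabla\subscr{C}{exp}\cdot\dot\p = -\norm{\nabla\subscr{C}{exp}}^2 \le 0$, with equality only at critical points. To invoke LaSalle I must first confine the trajectory to a compact set, which follows from coercivity of $\subscr{C}{exp}$: since $\phi$ is a probability density and $a>c$, the integrand of \eqref{eq:cost} dominates $(a-c)Y$ for large $Y$ and $a\sqrt{b}\,(\abs{X}-W)$ for large $\abs{X}$, so $\subscr{C}{exp}(X,Y)\to+\infty$ as $\norm{(X,Y)}\to\infty$ in the invariant region. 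Each sublevel set intersected with $[0,W]\times\real_{\ge 0}$ is therefore compact, the solution is bounded and exists for all $t\ge 0$, and LaSalle's principle gives convergence to the largest invariant subset of $\{\nabla\subscr{C}{exp}=0\}$. By the strict convexity and uniqueness established in Lemma~\ref{lem:cvx}, this set is the single point $\p^*\in{]0,W[}\times\real_{>0}$, the global minimizer, yielding (ii).

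The step I expect to be the main obstacle is the boundary behavior at $Y=0$: the gradient fails to be Lipschitz up to this face because the relevant integral can diverge logarithmically, so standard invariance theorems cannot be quoted directly. The careful estimate showing $Y\int_0^W \phi(x)/\sqrt{b(X-x)^2+Y^2}\,dx\to0$, and hence $\dot Y\to c>0$, is precisely what rules out finite-time escape to $Y=0$ and simultaneously supplies both the invariance claim in (i) and the global-in-time existence required by the LaSalle argument in (ii).
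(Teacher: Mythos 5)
Your proposal is correct and is essentially the argument the paper intends: the paper states Theorem~\ref{thm:opt} without a separate proof, but the ingredients you use are exactly those in the Appendix proof of Lemma~\ref{lem:cvx} --- the sign of $\partial \subscr{C}{exp}/\partial X$ for $X\leq 0$ and $X\geq W$, and the logarithmic estimate giving $\limsup_{Y\to 0^+}\partial \subscr{C}{exp}/\partial Y \leq -c$ --- combined with the ``standard gradient-based'' convergence step (your LaSalle argument with the unique minimizer from Lemma~\ref{lem:cvx}). Your added care about local Lipschitzness away from $Y=0$, coercivity for compact sublevel sets, and global existence fills in precisely the details the paper leaves implicit, with no gaps.
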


Theorem~\ref{thm:opt} answers the problem of minimizing the expected
value of $T$, given by Eq.~\eqref{eq:time}, with $a:= 1/(1-v^2)$,
$b:=(1-v^2)$ and $c:=v/(1-v^2)$, and $a>c$. Except for special cases such as in Remark~\ref{rem:spl}, it is difficult to provide analytical expressions for the
minimizer $\p^*$ of $\subscr{T}{exp}(\p^*)$. 

\begin{remark}[Equal target and vehicle speeds]\label{rem:spl}
If the target and the vehicle have equal speeds, the optimal placement
in $X$ which minimizes $\subscr{T}{exp}$ is at the \emph{centroid} of the distribution $\phi$, with the optimal $Y$ given by
\begin{align*}
X^* = {\int_0^W\phi(x)xdx}; \, Y^* = \sqrt{{\int_0^W\phi(x)(X^*-x)^2dx}}. \quad
\oprocend
\end{align*}
\end{remark}

\subsection{Optimal Placement for Adversarial Target}\label{secn:optadv}
We now address the differential pursuit-evasion games
stated in case (ii) of Section~\ref{secn:probsingle}, and 
determine optimal vehicle placements.

\subsubsection{Minimizing the Expected Vertical Height}
Given an ordered pair of distinct points $\{\q_1,\q_2\}$ in a plane and a
scalar $\lambda \in {[0,1[}$, \emph{Apollonius circle}
(cf.~\cite{RI:65}) is the set of points $\w$ in the plane that satisfy
$\norm{\q_2-\w}= \lambda\norm{\q_1-\w}$.  Letting the pursuer position
$\p=:\q_1$, the target (evader) position be $\q_2$ and $v=:\lambda$, the
following is an established result.

 \begin{proposition}[Apollonius circle during pursuit]\label{prop:appo}
If the pursuer and the evader both travel straight toward a point $U$
on the Apollonius circle, then any new such circle, obtained from a
pair of simultaneous intermediate positions of the pursuer and the
evader, is tangent to the original circle at U, and is
contained in the original circle.
\end{proposition}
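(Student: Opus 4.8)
The plan is to recognize that the two simultaneous intermediate positions of the pursuer and the evader are the images of the initial positions $\q_1$ and $\q_2$ under a single homothety (central scaling) whose center is $U$, and then to exploit the fact that homotheties scale all distances by the same factor and therefore carry an Apollonius circle to an Apollonius circle of the \emph{same} ratio $\lambda$. Once this is set up, tangency and containment will drop out of the elementary geometry of a homothety fixing a point on the circle.

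First I would parametrize the motion. Since the pursuer has unit speed and moves straight toward $U$, after time $t$ it sits at $\q_1(t) = \q_1 + (t/d_1)(U-\q_1)$, where $d_1 := \norm{U-\q_1}$. The evader moves straight toward $U$ at speed $\lambda$, so at the same instant it sits at $\q_2(t) = \q_2 + (\lambda t/d_2)(U-\q_2)$, where $d_2 := \norm{U-\q_2}$. The defining property of $U$ on the original circle is $d_2 = \lambda d_1$, which makes the scalar coefficient in the second expression equal to $\lambda t/(\lambda d_1) = t/d_1$, the \emph{same} value $s := t/d_1$ appearing in the first. Hence $\q_i(t)-U = (1-s)(\q_i-U)$ for $i\in\{1,2\}$, exhibiting both intermediate positions as the image under the homothety $h$ centered at $U$ with ratio $(1-s)\in{]0,1[}$ for any $t$ strictly between $0$ and the capture time $d_1$. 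This collapse of the ``simultaneous'' constraint to a single scaling parameter is the crux of the argument, and I expect it to be the main obstacle: it hinges on using $d_2=\lambda d_1$ at exactly the right place.

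Next I would invoke the invariance of the Apollonius ratio under $h$: because $h$ multiplies every distance by $(1-s)$, the relation $\norm{\w-\q_2}=\lambda\norm{\w-\q_1}$ is equivalent to $\norm{h(\w)-\q_2(t)}=\lambda\norm{h(\w)-\q_1(t)}$. Therefore the new Apollonius circle, namely the locus for the pair $\{\q_1(t),\q_2(t)\}$ with the same ratio $\lambda$, is precisely $h$ applied to the original circle. Since $h$ fixes $U$ and $U$ lies on the original circle, $U$ also lies on the new circle, so the two circles meet at $U$.

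Finally I would read off tangency and containment from the homothety. Writing $O$ and $r$ for the center and radius of the original circle, the new circle has center $O' = h(O) = U + (1-s)(O-U)$ and radius $r' = (1-s)r$, with $O'$ on the segment $UO$ because $(1-s)\in{]0,1[}$. Thus the radii $UO$ and $UO'$ are collinear, the two circles share the common tangent line through $U$ perpendicular to this ray, and they are tangent at $U$. For containment I would note $\norm{O-O'} = s\,\norm{O-U} = s\,r$ (using $\norm{O-U}=r$ since $U$ is on the original circle), so $\norm{O-O'} = r - r'$, which is exactly the condition for the smaller circle to be internally tangent to and contained in the larger one. These last verifications are routine once the homothety picture from the first step is in place.
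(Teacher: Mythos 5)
Your proof is correct, but there is nothing in the paper to compare it against: the paper states Proposition~\ref{prop:appo} as ``an established result,'' deferring to Isaacs' \emph{Differential Games} (the reference cited for the Apollonius circle), and supplies no proof of its own. Your homothety argument is a complete, self-contained, and elementary substitute for that citation. The crux is exactly where you identified it: because $U$ lies on the original circle, $d_2=\lambda d_1$, so the evader's progress fraction $\lambda t/d_2$ equals the pursuer's progress fraction $t/d_1=:s$, and both intermediate positions are images of the initial ones under the single homothety $h$ centered at $U$ with ratio $1-s\in{]0,1[}$ (note also that both agents reach $U$ simultaneously at $t=d_1$, so this range of $s$ covers all intermediate times, and $\q_1(t)-\q_2(t)=(1-s)(\q_1-\q_2)\neq 0$ keeps the new circle well defined). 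Since $h$ scales all distances by $1-s$, the Apollonius locus of ratio $\lambda$ for the displaced pair is exactly $h$ applied to the original circle; then $h(U)=U$ gives the common point, collinearity of $U$, $O'=h(O)$, $O$ gives the common tangent line at $U$, and $\norm{O-O'}=sr=r-r'$ is precisely the internal-tangency condition, which yields containment via $\norm{P-O}\leq\norm{P-O'}+\norm{O'-O}=r$ for every $P$ on the new circle. Each of these steps checks out, so you have in effect proved what the paper only quotes.
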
  

For any evader strategy, the pursuit strategy that minimizes the
vertical height (cf.~\cite{RI:65}) is to choose the pursuer's
velocity vector such that the line joining the pursuer and the evader
remains parallel at all times to the line joining their initial
locations, while reducing the distance. So, for optimal vehicle placement, it
suffices to determine the optimal evader
strategy. Algorithm~\ref{algo:mtt} summarizes the optimal evader strategy,
shown in Figure~\ref{fig:mtt}.

\begin{algorithm}[h] 
  %\dontprintsemicolon %
  %\nocaptionofalgo %
  \KwAssumes{Pursuer at $(X,Y)$. Evader at $(x,0)$.} %

Compute center and radius of the Apollonius circle:
\begin{align*}
O &:= (O_x,O_y)=\Big(\frac{x-v^2X}{1-v^2},\frac{-v^2Y}{1-v^2}\Big), \\ 
R &:= \frac{v}{1-v^2}\sqrt{(X-x)^2+Y^2}. 
\end{align*}

Move towards the point $(O_x,O_y+R)$ with speed $v$.

  \caption{\bf Move towards top-most}
\label{algo:mtt}
\end{algorithm}

\begin{figure}[!h]
\centering
%\resizebox{0.7\linewidth}{!}{\input{mtt.tex}}
\includegraphics[width=0.5\linewidth]{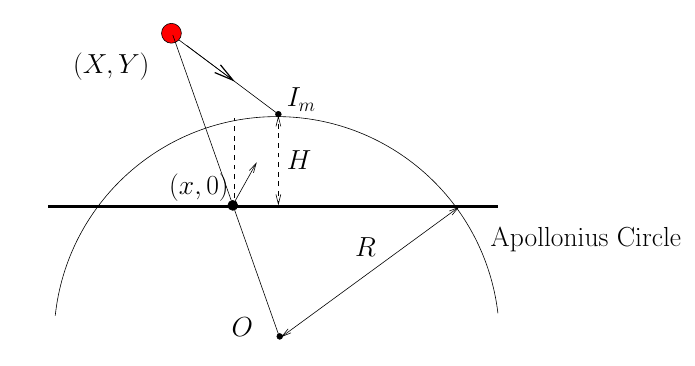}
\caption{The evader strategy that maximizes $H$ is to move towards the
  top-most point on the Apollonius circle drawn initially.}
\label{fig:mtt}
\end{figure}

The following result is immediate from Proposition~\ref{prop:appo}.
\begin{lemma}[Optimality of Move towards top-most]\label{lem:mtt}
The strategy \emph{move towards top-most} is the evader's optimal
strategy and the optimal vertical height is
\[
H(X,Y,x) = \frac{v}{1-v^2}\sqrt{(X-x)^2+Y^2}-\frac{v^2Y}{1-v^2}.
\]
\end{lemma}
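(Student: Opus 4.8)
The plan is to reduce the lemma to a one-dimensional optimization over the Apollonius circle, invoking the already-cited optimal pursuer strategy so that only the evader's choice remains free. First I would observe that, by the very definition of the Apollonius circle with $\q_1=\p$, $\q_2$ the evader's position, and $\lambda=v$, a point $\w$ satisfies $\norm{\q_2-\w}=v\norm{\p-\w}$ precisely when the evader (speed $v$) and the pursuer (unit speed) reach $\w$ at the same instant by straight-line motion; equivalently, the open disk $\{\norm{\q_2-\w}<v\norm{\p-\w}\}$ (which contains $\q_2$) is exactly the set of points the evader reaches strictly before the pursuer. Hence every feasible capture point lies in the closed Apollonius disk, and the vertical height $H$ is simply the $Y$-coordinate of the capture point.

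Since the paragraph preceding Algorithm~\ref{algo:mtt} already establishes that parallel navigation is the pursuer's height-minimizing response to any evader, it remains only to optimize over the evader. For a fixed aim point $U$ on the circle, committing the evader to a straight line toward $U$ forces capture exactly at $U$: every interior point of the segment from $\q_2$ to $U$ lies inside the disk, where the evader arrives first, so the earliest the pursuer can intercept is the boundary point $U$ itself. I would then invoke Proposition~\ref{prop:appo} to confirm that the two optimal straight-line motions are mutually consistent: the intermediate Apollonius circles are nested inside the original and tangent to it at $U$, so $U$ stays the unique capture point and no point of greater height ever becomes reachable.

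These two facts give the saddle point. On one side, the pursuer's parallel navigation keeps the intermediate circles contained in the initial disk (the containment clause of Proposition~\ref{prop:appo}), so the capture height can never exceed the height of the topmost point of the initial circle; on the other side, the evader attains that exact height by aiming straight at the topmost point $(O_x,O_y+R)$. Therefore the evader's optimal strategy is \emph{move towards top-most}, and the optimal height equals the ordinate of that point, namely $O_y+R$. Substituting the center ordinate $O_y=-v^2Y/(1-v^2)$ and radius $R=v\sqrt{(X-x)^2+Y^2}/(1-v^2)$ from Algorithm~\ref{algo:mtt} yields the claimed expression for $H(X,Y,x)$.

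The hard part will not be the algebra but making the saddle-point bound airtight — specifically the upper bound, that under parallel navigation the evader cannot be captured above the initial topmost point no matter how it maneuvers. The clean route is to treat the ordinate of the topmost point of the \emph{current} Apollonius circle as a value function, use the containment clause of Proposition~\ref{prop:appo} to show it is non-increasing under the pursuer's strategy and stationary under the evader's top-most strategy, and then use the tangency clause to identify the common value with $O_y+R$. I would need to confirm that the nesting proved in Proposition~\ref{prop:appo} for straight-line play extends, via the standard instantaneous differential-game comparison, to arbitrary evader maneuvers against parallel navigation; this is precisely the content that renders the lemma ``immediate'' once Proposition~\ref{prop:appo} is available.
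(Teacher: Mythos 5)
Your proposal is correct and follows essentially the same route as the paper: the paper also treats the lemma as an immediate consequence of Proposition~\ref{prop:appo} together with the cited optimality of parallel navigation for the pursuer, which is exactly the saddle-point argument you spell out (containment of the nested Apollonius circles gives the upper bound, aiming at the topmost point attains it, and substituting $O_y+R$ gives the formula). The extension of the nesting property to arbitrary evader maneuvers that you flag at the end is precisely the detail the paper leaves implicit in calling the result ``immediate.''
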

Comparing the expression for $H$ given by Lemma~\ref{lem:mtt} with the
definition of $C$ in Eq.~\eqref{eq:C}, we have $a:=
v/(1-v^2)$, $b:=1$ and $c:=v^2/(1-v^2)$, and $a>c$ since
$v<1$. Thus, by applying Theorem~\ref{thm:opt}, we obtain
the following result.

\begin{theorem}[Minimizing expected height]
  From an initial location in $\real\times \real_{>0}$, assume that
  the vehicle motion obeys Eq.~\eqref{eq:graddes} with
  $\subscr{C}{exp}$ replaced by $\subscr{H}{exp}$, then the vehicle
  position $\p(t)$ converges to the unique global minimizer of
  $\subscr{H}{exp}$

\end{theorem}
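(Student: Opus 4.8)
The plan is to recognize this theorem as a direct corollary of Theorem~\ref{thm:opt}, obtained by verifying that the expected-height cost $\subscr{H}{exp}$ is a particular instance of the general expected cost $\subscr{C}{exp}$ analyzed in Section~\ref{secn:general}. Concretely, I would invoke Lemma~\ref{lem:mtt}, which asserts that under the optimal (move-towards-top-most) evader strategy the vertical height at intercept equals
\[
H(X,Y,x) = \frac{v}{1-v^2}\sqrt{(X-x)^2+Y^2}-\frac{v^2Y}{1-v^2},
\]
and then read off the coefficients so that $H$ matches the template in Eq.~\eqref{eq:C}.

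First I would set $a := v/(1-v^2)$, $b := 1$, and $c := v^2/(1-v^2)$, and check the three structural hypotheses imposed on Eq.~\eqref{eq:C}: positivity of $a,b,c$ and the strict inequality $a>c$. Since $0<v<1$ we have $1-v^2>0$, so all three constants are positive; and $a>c$ reduces to $v>v^2$, i.e.\ $v(1-v)>0$, which holds precisely because $v<1$. Hence $\subscr{H}{exp}(X,Y)=\int_0^W H(X,Y,x)\phi(x)\,dx$ is exactly a cost of the form $\subscr{C}{exp}$ with admissible parameters.

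With this identification in hand, the remaining step is purely an application of Theorem~\ref{thm:opt}: because $\subscr{H}{exp}$ satisfies all of that theorem's hypotheses, the gradient-descent flow~\eqref{eq:graddes} (with $\subscr{C}{exp}$ replaced by $\subscr{H}{exp}$) keeps $\p(t)$ in $[0,W]\times\real_{>0}$ for all times and drives it to the unique global minimizer of $\subscr{H}{exp}$, which is the desired conclusion.

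The main obstacle, such as it is, is bookkeeping rather than analysis: I must make sure the parameter match is legitimate, i.e.\ that the constraint $v<1$ is genuinely what guarantees $a>c$, since this is the inequality on which the strict convexity and uniqueness from Lemma~\ref{lem:cvx} (and hence the convergence claim of Theorem~\ref{thm:opt}) ultimately rely. No new estimate or limiting argument is required, because convexity, smoothness of the gradient, existence and uniqueness of the minimizer, and convergence of the flow have all already been established at the level of the generic cost $\subscr{C}{exp}$. \oprocend
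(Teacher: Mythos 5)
Your proposal is correct and follows exactly the paper's own argument: the paper likewise matches $H$ from Lemma~\ref{lem:mtt} to the template of Eq.~\eqref{eq:C} with $a = v/(1-v^2)$, $b = 1$, $c = v^2/(1-v^2)$, notes $a>c$ since $v<1$, and then cites Theorem~\ref{thm:opt}. Nothing is missing; the coefficient check and the appeal to the general result are the entire proof.
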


\subsubsection{Minimizing the Expected Intercept Time}
The underlying differential game in this formulation is the classic
\emph{wall pursuit} game (cf.~\cite{RI:65}). We
present the main result for completeness.

\begin{lemma}[Wall Pursuit game]\label{lem:wall}
The evader strategy that maximizes the intercept time is to move
towards the furthest point of the Apollonius circle on the $X$-axis.
\end{lemma}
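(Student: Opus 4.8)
The plan is to reduce this differential game to a static geometric optimization over the Apollonius circle, exactly as in the proof of Lemma~\ref{lem:mtt}, and then to impose the wall constraint. First I would invoke Proposition~\ref{prop:appo}: if the evader commits to running straight at a point $U$ on the initial Apollonius circle and the pursuer replies with the parallel-navigation (constant-bearing) strategy, then every intermediate Apollonius circle is internally tangent to its predecessor at $U$, so the shrinking family collapses to the single point $U$ and capture occurs there. Since both players then travel straight, the intercept time is the common travel time $\Ti = \norm{\p - U}$; and because $U$ lies on the Apollonius circle, $\norm{(x,0)-U} = v\,\norm{\p-U}$, so the evader's travel time $\norm{(x,0)-U}/v$ equals $\norm{\p - U}$, confirming consistency. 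The nesting property also shows that the pursuer can force capture on or inside the current circle while the evader can force it anywhere on the circle, so the evader's problem collapses to choosing the capture point $U$ on the Apollonius circle that maximizes $\norm{\p - U}$.

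Next I would impose the wall. The evader starts at $(x,0)$ on the $X$-axis and may not cross below it, so any admissible straight run ends at a point $U$ with $U_y \ge 0$; thus the feasible capture points are exactly the closed upper arc of the Apollonius circle, whose two endpoints are its intersections with the $X$-axis. Here the center $O=(O_x,O_y)$ has $O_y = -v^2 Y/(1-v^2) < 0$, so this arc is a proper sub-arc and the two axis-crossings are well defined. On the full circle the distance $\norm{\p - U}$ is maximized at the point antipodal to the foot of $\p$, i.e. the point of the circle lying on the far side of $O$ along the line through $O$ and $\p$; since $\p$ is above the axis and $O$ below it, this antipodal point has $y$-coordinate $O_y - R\,(Y-O_y)/\norm{\p - O} < O_y < 0$ and is therefore infeasible. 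Because $\norm{\p - U}$ varies monotonically with the central angle measured away from this antipodal maximizer, its maximum over the feasible upper arc is attained at a boundary point of the arc, namely the axis-crossing lying farther from $\p$ --- which, writing the crossings as $(u,0)$, is the one whose abscissa $u$ is farther from $X$. This is precisely the furthest point of the Apollonius circle on the $X$-axis.

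Combining the two steps, the evader's optimal capture point is this furthest axis-crossing, and by the first step the optimal way to reach it is to run straight at it; hence the claimed strategy. I expect the main obstacle to be the rigorous justification that straight-line strategies to a single boundary point constitute a saddle point once the wall constraint is active: one must rule out that the evader improves the intercept time by a curved path skimming the wall, and symmetrically that the pursuer cannot do better than parallel navigation against such paths. This is exactly the delicate part of Isaacs' classical wall-pursuit analysis, and I would dispatch it by appealing to that solution together with the shrinking-tangency of Proposition~\ref{prop:appo}, which guarantees that any evader deviation only shrinks the reachable family of capture points and so cannot increase $\norm{\p - U}$.
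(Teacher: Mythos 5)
Your proposal is correct in its geometric content, but it is worth noting that it is considerably \emph{more} than the paper itself offers: the paper gives no proof of Lemma~\ref{lem:wall} at all, presenting it as the classical wall-pursuit result and citing Isaacs~\cite{RI:65} ``for completeness.'' Your reconstruction --- reduce to a static choice of capture point $U$ on the initial Apollonius circle via Proposition~\ref{prop:appo}, observe that wall-admissible straight runs correspond to the closed upper arc, show that the unconstrained maximizer of $\norm{\p-U}$ (the point of the circle beyond $O$ on the line through $\p$ and $O$) lies strictly below the axis because $O_y<0$, and conclude by monotonicity of the chordal distance in central angle that the constrained maximum sits at an endpoint of the arc, namely the axis crossing farther from $\p$ --- is sound, and it supplies the evader-side guarantee rigorously: every point of the open segment from $(x,0)$ to $U$ lies strictly inside the Apollonius disk, so no pursuer motion can force capture before time $\norm{\p-U}$. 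Two caveats. First, the existence of the two axis crossings does not follow from $O_y<0$ alone (a circle centered below the axis may miss the axis entirely); it follows from the evader's position $(x,0)$ lying strictly inside the circle, or equivalently from $|O_y|=v^2Y/(1-v^2)<R=v\sqrt{(X-x)^2+Y^2}/(1-v^2)$ --- a one-line fix. Second, as you yourself flag, the pursuer-side half of the saddle point --- that no wall-respecting, possibly curved, evader path does better than the far axis crossing against optimal pursuit --- is not delivered by the nesting argument (under parallel navigation against a curved evader path the pursuer's path is curved, so the capture time is no longer the straight-line distance to the capture point), and you defer it to Isaacs' analysis. Since the paper defers the \emph{entire} lemma to that same source, your attempt stands on at least as firm ground as the paper; just be aware that this deferred step is the genuinely hard part of the wall-pursuit game, not a routine consequence of Proposition~\ref{prop:appo}.
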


\begin{figure}[!h]
\centering
%\resizebox{0.7\linewidth}{!}{\input{intercept.tex}}
\includegraphics[width=0.5\linewidth]{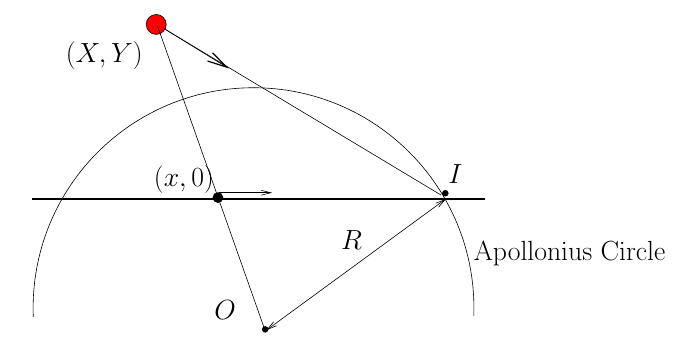}
\caption{The evader strategy that maximizes $\Ti$ is to move towards
  the furthest point on the Apollonius circle drawn initially. }
\label{fig:intercept}
\end{figure}
This optimal evader strategy is illustrated in Figure
\ref{fig:intercept}. Now, given a convex region $\QQ\subset \real$ and a density function
$\map{\psi}{\QQ}{\real_{\geq 0}}$, the \emph{median}
(cf.~\cite{SPF-JSBM-KB:05}) is the unique global
minimizer $p^*$ of $\int_{\QQ} \abs{p-z}\psi(z)dz$.

We now present the main result of this section.

\begin{theorem}[Optimality of the Median]
The median of the region $[0,W]\times \{0\}$ with the density
function $\phi$ uniquely minimizes the expected intercept time.
\end{theorem}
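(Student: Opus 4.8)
The plan is to reduce the expected intercept time to a positive scalar multiple of the continuous $1$--median function of $\phi$ over $\QQ=[0,W]$, so that its unique minimizer is the median by definition. Concretely, I would first derive a closed-form expression for the optimal intercept time $\Ti(X,Y,x)$ induced by the move-to-furthest-point strategy of Lemma~\ref{lem:wall}, then show that this expression is strictly increasing in $Y$ on $\real_{\geq 0}$ so that the optimal vertical coordinate is $Y^*=0$, and finally evaluate it on the generator to recover $\abs{X-x}$ up to a positive constant.

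For the first step, recall from Algorithm~\ref{algo:mtt} that the Apollonius circle has center $O=(O_x,O_y)$ and radius $R$ as given there. By the tangency and containment property of Proposition~\ref{prop:appo}, when both players steer straight toward a common point $U$ on the circle they arrive simultaneously; since the pursuer has unit speed, the intercept time equals the pursuer's travel distance, $\Ti=\norm{U-\p}$. Intersecting the circle with the $X$-axis gives the two candidate capture points $U_\pm=\big(O_x\pm\sqrt{R^2-O_y^2},\,0\big)$, which always exist because $R^2-O_y^2\ge 0$. The evader maximizes $\norm{U-\p}$, so taking the farther intersection yields
\[
\Ti(X,Y,x)=\sqrt{\big(\abs{O_x-X}+\sqrt{R^2-O_y^2}\big)^2+Y^2}.
\]
Substituting the expressions for $O_x,O_y,R$ and simplifying, the horizontal shift reduces to $O_x-X=(x-X)/(1-v^2)$ and the radical to $\sqrt{R^2-O_y^2}=\tfrac{v}{1-v^2}\sqrt{(X-x)^2+(1-v^2)Y^2}$.

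With this formula in hand, every $Y$-dependent term under the outer square root is nondecreasing in $Y$ on $\real_{\ge 0}$: the explicit $Y^2$ increases, and the radical $\sqrt{R^2-O_y^2}$ increases as well, while $\abs{O_x-X}$ is independent of $Y$. Hence $\Ti(X,Y,x)$ is strictly increasing in $Y$, so for every fixed $X$ the integrand, and therefore the expected intercept time $\subscr{\Ti}{exp}(X,Y)=\int_0^W\Ti(X,Y,x)\phi(x)\,dx$, is minimized at $Y^*=0$. Evaluating at $Y=0$ collapses the two radicals and gives $\Ti(X,0,x)=\abs{X-x}/(1-v)$, whence $\subscr{\Ti}{exp}(X,0)=\tfrac{1}{1-v}\int_0^W\abs{X-x}\phi(x)\,dx$, a positive multiple of the continuous $1$--median function. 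Minimizing over $X$ is then identical to minimizing the $1$--median function, whose unique global minimizer is by definition the median; combined with $Y^*=0$, this places the unique minimizer of $\subscr{\Ti}{exp}$ at the median on the generator $[0,W]\times\{0\}$.

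The main obstacle is the algebraic reduction in the second step: correctly identifying the farther $X$-axis intersection of the Apollonius circle as the evader's target and simplifying $\abs{O_x-X}+\sqrt{R^2-O_y^2}$ so that the combined expression is manifestly monotone in $Y$ and collapses cleanly to $\abs{X-x}/(1-v)$ at $Y=0$. Once that expression is verified, the monotonicity argument and the appeal to the uniqueness of the median are routine.
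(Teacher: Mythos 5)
Your proposal is correct and follows essentially the same route as the paper: invoke Lemma~\ref{lem:wall}, compute the intercept time from the geometry of the Apollonius circle, argue that the optimal pursuer placement lies on the $X$-axis, and reduce $\subscr{\Ti}{exp}(X,0)$ to a positive multiple of the continuous $1$--median function, which is uniquely minimized by the median. If anything, your version is tighter than the paper's: you prove the monotonicity in $Y$ that the paper merely asserts (``pursuer placement on the $X$-axis results into decreasing the intercept time''), and your constant $1/(1-v)=(1+v)/(1-v^2)$ is the correct one --- the paper's $(v+3)/(1-v^2)$ reflects an arithmetic slip --- though the value of the positive constant is immaterial to the conclusion.
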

\begin{proof} 
From Lemma~\ref{lem:wall} and Pythagoras theorem, 
\[
\Ti(X,Y,x) = \frac{1}{v}\sqrt{R^2-\Big(\frac{vY}{1-v}\Big)^2} + \frac{1}{v}
\left|\frac{x-vX}{1-v}-x\right|,
\]
where $R$ is the radius of the Apollonius circle drawn at the initial
instant. Since pursuer placement on the $X$-axis results into
decreasing the intercept time $\Ti$, we have
\[
\subscr{\Ti}{exp}(X,0) = \frac{v+3}{1-v^2}\int_0^W\abs{X-x}\phi(x)dx,
\]
which is minimized uniquely by the median.
\end{proof}

\section{The Case of Multiple Vehicles} \label{secn:multi}
We now address the multi-vehicle case from Section~\ref{secn:probmulti}.

\subsection{Dominance Region Partition}
We introduce a generator partitioning procedure by defining dominance
regions between each pair of vehicles\footnote{Here, a partition of
  $[0,W]$ is a collection $\{\VV_1,\dots,\VV_m\}$ of subsets of
  $[0,W]$ such that $\VV_1\cup\ldots \cup \VV_m = [0,W]$ and
  $\VV_i\cap \VV_j$ has zero length, for all distinct $i,j$.}. We will see that the resulting
partition allows us to write the cost in Eq.~\eqref{eq:multtime} in a
simplified form as in Eq.~\eqref{eq:texp}.

\begin{definition}[Pairwise dominance region]\label{def:dominance} 
For $i,j \in \{1,\dots,m\}$, the pairwise dominance
  region $U_{ij}\subseteq [0,W]$ of $\p_i$ with respect to $\p_j$ is the
set of initial target locations for which $\p_i$ takes \emph{lesser}
time to intercept the target than $\p_j$:
\[
U_{ij}:=\{ x\in[0,W]  \, | \,T(\p_i,x)\leq  T(\p_j,x)\}.
\]
\end{definition}

We now describe a procedure (summarized in Algorithm~\ref{algo:pair})
to determine $U_{ij}$. Without loss of generality, assume that
$X_i<X_j$. If $Y_i=Y_j$, i.e., the vehicles are at the same distance
from the generator, then $U_{ij}$ is the piece of $G$ that lies in the
half-plane that is formed by the perpendicular bisector of the segment
joining $\p_i$ and $\p_j$ and which contains $\p_i$.  Now if
$Y_i<Y_j$, then we look for points $(x,0)$ in $G$ for which
$T(\p_i,x)\leq T(\p_j,x)$. By setting $(1-v^2)=:b$,
Eq.~\eqref{eq:time} gives
\begin{equation}\label{eq:pair}
\sqrt{b(X_i-x)^2+Y_i^2}-vY_i \leq \sqrt{b(X_j-x)^2+Y_j^2}-vY_j.
\end{equation}
Eq.~\eqref{eq:pair} simplifies to a quadratic in $x$ having real
roots, which provides at most two points for the boundary between
$U_{ij}$ and $U_{ji}$. To determine these boundary points, consider the
perpendicular bisector of the segment joining $\p_i$ and $\p_j$, as
shown in Figure~\ref{fig:geom}.
\begin{figure}[!h]
\centering 
%\resizebox{0.7\linewidth}{!}{\input{geom.tex}}
\includegraphics[width=0.5\linewidth]{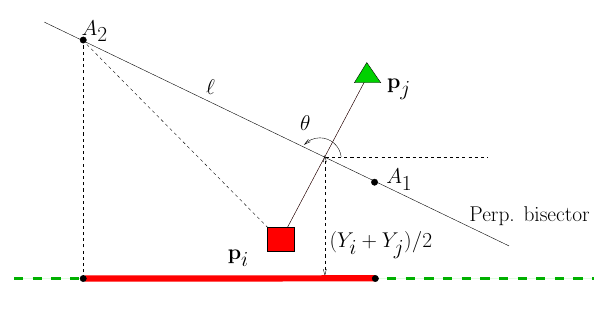}
\caption{Pairwise dominance regions. The
  solid and dashed dominance regions belong to the square (red) and
  the triangle (green) vehicles respectively.}
\label{fig:geom}
\end{figure}
We look for points $A_1$ and $A_2$ on this bisector such that the
distances of these points from the real line is $v$ times their
respective distances from the vehicles. This leads to the
following quadratic in $\ell$
 \[%\begin{multline*}
4(\sin^2{\theta}-v^2)\ell^2 + 4(Y_i+Y_j)\sin{\theta}\ell = - (Y_i+Y_j)^2+v^2\norm{\p_i-\p_j}^2, 
\]%\end{multline*}
%% \[
%% l^2(\sin^2{\theta}-v^2) +2M\sin{\theta}l + M^2-v^2d^2 = 0,
%% \]
where $\ell$ and $\theta:= \arctan_2((Y_j-Y_i),(X_j-X_i)) + \pi/2$ are
as shown in Figure~\ref{fig:geom}. Let $\ell_1$ and $\ell_2$ be the
roots of the above quadratic. Then the $Y$-coordinates of the
candidate boundary points $A_1$ and $A_2$ are given by
\[
[y_1 ,\, y_2]^T = [1 ,\,1]^T(Y_i+Y_j)/2 + [\ell_1 ,\,\ell_2]^T\sin{\theta}.
\]
Now, $A_1$ and $A_2$ are both boundary points if and only if both have
positive $Y$-coordinates. It can be shown that there exists at least
one among them which has positive $Y$-coordinate. There arise two cases: 

(i) If there are two candidate points $A_1$ and $A_2$ (as in
Figure~\ref{fig:geom}), then we look at their corresponding $X$
coordinates, $(x_1,x_2)$ given by Step 9. For $(x,0)\in
G\cap[x_1,x_2]\times\{0\}$, we have $T(\p_i,x)\leq T(\p_j,x)$, and
thus $U_{ij}$ is
$G\cap[x_1,x_2]\times\{0\}$.

(ii) If there is only one candidate point $A_1$, then we look at its
$X$ coordinate, $x_1$ given by Step 13. By assumption $X_i<X_j$, and
hence for $(x,0)\in G\cap[-\infty,x_1]\times\{0\}$, we have
$T(\p_i,x)\leq T(\p_j,x)$ and thus $U_{ij}$ is $ G\cap[-\infty,x_1]\times\{0\}$.

\begin{algorithm}[h] 
  %\dontprintsemicolon %
  % \nocaptionofalgo %
  \KwAssumes{Distinct $\p_i=(X_i,Y_i)$, $\p_j=(X_j,Y_j)$.} %
  %\KwOut{Pairwise dominance region: $U_{ij}$.}%
  \eIf{$Y_i=Y_j$,}{
    $U_{ij} := \begin{cases}
      [0,(X_i+X_j)/2], \, \, \, \text{ if } X_i<X_j \\
      [(X_i+X_j)/2,W], \text{ if } X_i>X_j
    \end{cases}$
  }%%
  {
    $\theta:= \arctan_2(Y_j-Y_i, X_j-X_i) + \pi/2$\\[.3em]
    $\ell_{1,2} :=$ two roots of $0=4(\sin^2(\theta)-v^2)\ell^2$ \\ \quad$+
    4(Y_i+Y_j)\sin(\theta)\ell + (Y_i+Y_j)^2-v^2\norm{\p_i-\p_j}^2$\\[.3em]
    $y_{1,2} := (Y_i+Y_j)/2 + \sin(\theta)\ell_{1,2}$ \\[.3em]

    \eIf{$y_1>0$ \textup{and} $y_2>0$}%%
    {$x_{1,2} := (X_i+X_j)/2 + \cos(\theta)\ell_{1,2}$\\
      %\begin{align*}
       $ U_{ij}:= \begin{cases} [0,W]\cap[x_1,x_2],  \qquad \, \, \text{ if } Y_i<Y_j \\
          [0,W] \setminus {]x_1,x_2[}, \qquad \, \, \, \text{ if } Y_i\geq Y_j. 
        \end{cases}$
      %\end{align*}
    }%%
    {$k :=$ index in $\{1,2\}$ for which $y_k>0$\\
      $x := (X_i+X_j)/2 + \ell_k\cos(\theta)$

      %\begin{align*}
      $U_{ij}:= \begin{cases} [0,W]\cap{]{-\infty},x]}, \qquad \text{ if }
        X_i<X_j \\
          [0,W]\cap{[x,+\infty[}, \qquad \text{ if } X_i\geq X_j
        \end{cases}$
      %\end{align*}
    }   
    }
  \caption{\bf Pairwise Dominance Region}
  \label{algo:pair}
\end{algorithm}

Thus, we have established the following property.
\begin{proposition}[Pairwise dominance region]\label{prop:pair}
Given distinct locations $\p_i=(X_i,Y_i)$, $\p_j=(X_j,Y_j)$, if a target arrives at $(x,0)$, where $x\in U_{ij}$ generated by Algorithm~\ref{algo:pair}, then $T(\p_i,x)\leq T(\p_j,x)$.
\end{proposition}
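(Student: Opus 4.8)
The plan is to show that the set $U_{ij}$ returned by Algorithm~\ref{algo:pair} coincides, on $[0,W]$, with the true superlevel set of the time difference, so that membership in $U_{ij}$ forces $T(\p_i,x)\le T(\p_j,x)$. Multiplying the defining inequality $T(\p_i,x)\le T(\p_j,x)$ by $b=1-v^2>0$ and using Eq.~\eqref{eq:time} reduces it exactly to Eq.~\eqref{eq:pair}; writing $g_k(x):=\sqrt{b(X_k-x)^2+Y_k^2}$ and $f(x):=\big(g_j(x)-vY_j\big)-\big(g_i(x)-vY_i\big)$, the dominance condition becomes $f(x)\ge 0$, with $f$ continuous on $\real$. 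First I would dispatch the case $Y_i=Y_j$: there the $Y$-terms cancel and the inequality collapses to $(X_i-x)^2\le(X_j-x)^2$, i.e. $x\le(X_i+X_j)/2$ when $X_i<X_j$, which is exactly the first branch of the algorithm.

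For $Y_i\ne Y_j$, the core step is to locate the zero set $\{f=0\}$ and to justify the quadratic in $\ell$. The key observation is geometric: if $T(\p_i,x)=T(\p_j,x)=t$, then both unit-speed vehicles reach the common moving-target point $A:=(x,vt)$ at the same time $t$, hence $\norm{A-\p_i}=t=\norm{A-\p_j}$. Thus every boundary point $A$ is equidistant from $\p_i$ and $\p_j$, so it lies on their perpendicular bisector, and its height satisfies $vt=v\norm{A-\p_i}$. Parametrizing the bisector by signed arclength $\ell$ from the midpoint in the direction $\theta$ and invoking Pythagoras, $\norm{A-\p_i}=\sqrt{(\norm{\p_i-\p_j}/2)^2+\ell^2}$ while the height equals $(Y_i+Y_j)/2+\ell\sin\theta$; squaring the height condition $(Y_i+Y_j)/2+\ell\sin\theta=v\norm{A-\p_i}$ produces precisely the displayed quadratic in $\ell$, and hence the roots $\ell_{1,2}$, the heights $y_{1,2}$ and abscissae $x_{1,2}$ used in the algorithm. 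Because the target moves upward, only intercept points with $y>0$ are admissible, which explains the positivity test in the algorithm; that at least one root has $y>0$ follows from the signs of the constant and leading coefficients of this quadratic.

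Finally I would fix which side of these boundary abscissae constitutes $\{f\ge 0\}$ by a continuity-and-sign argument. Since $f$ is continuous and vanishes only at the admissible boundary points, its sign is constant on each interval they cut out, so $\{f\ge0\}\cap[0,W]$ is one of the closed intervals (or the complement of an open interval) listed in the algorithm. I would pin down the correct alternative by evaluating $f$ at a convenient reference point, namely a vehicle abscissa $X_i$ or $X_j$, or the limits $x\to\pm\infty$, whose sign is easy to read off; for instance, when $Y_i<Y_j$ one has $f(X_i)=g_j(X_i)-\big(vY_j+(1-v)Y_i\big)>0$ because $g_j(X_i)\ge Y_j>vY_j+(1-v)Y_i$, placing $X_i$ in the selected interval $[x_1,x_2]$. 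Matching the sign of such a reference value against each of the four branches (two boundary points with $Y_i<Y_j$ versus $Y_i\ge Y_j$, and one boundary point with $X_i<X_j$ versus $X_i\ge X_j$) and intersecting with $[0,W]$ then yields the claim. The main obstacle is this exhaustive case bookkeeping: choosing a valid reference point in every sub-case and, in tandem, discarding the spurious roots introduced by the two squarings, both difficulties being resolved by the $y>0$ admissibility test together with the reference-point sign evaluation.
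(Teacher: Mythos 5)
Your proposal is correct and follows essentially the same route as the paper's own (constructive) proof, which is the discussion preceding Algorithm~\ref{algo:pair}: the case split on $Y_i=Y_j$, the characterization of boundary points as intersections of the perpendicular bisector of $\p_i\p_j$ with the locus where height equals $v$ times the distance to the vehicles (yielding exactly the paper's quadratic in $\ell$), the $y>0$ admissibility test, and the final identification of $U_{ij}$ as one of the listed intervals. If anything, your write-up is more complete than the paper's: the simultaneous-arrival argument for why boundary points lie on the bisector, the coefficient-sign argument that at least one candidate has $y>0$, and the reference-point evaluation (e.g.\ $f(X_i)>0$ when $Y_i<Y_j$) pinning down which interval equals $\{x : T(\p_i,x)\le T(\p_j,x)\}$ are all steps the paper asserts without proof.
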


Similar to pairwise dominance regions, we introduce the concept of
\emph{dominance region} $\VV_i\in \PP([0,W])$ for the $i$th vehicle,
for every $i\in\{1,\dots,m\}$, which is the set of $X$-coordinates of
target locations for which $\p_i$ takes the \emph{minimum} time to
intercept of all vehicles.

Assuming complete communication between vehicles,
Algorithm~\ref{algo:pair} is extended to determine the dominance region
for a vehicle by (i) determining pairwise dominance regions between
vehicles and, (ii) taking intersection of all pairwise dominance
regions, as presented in Algorithm~\ref{algo:dominance}.
 
\begin{algorithm}[h] 
  %\dontprintsemicolon %
  %\nocaptionofalgo %
  \KwAssumes{Distinct locations $\{\p_1,\dots,\p_m\}$.} %
  \ForEach{\textup{vehicle} $j\in\{1,\dots,m\}\setminus\{i\}$,}{
  Determine $U_{ij}$ using Algorithm~\ref{algo:pair}. \\
  }
  $\VV_i = \bigcap_{j=1,\dots,m, j\neq i}U_{ij}$. \; %
\caption{\bf Dominance region}
\label{algo:dominance}
\end{algorithm}

Algorithm~\ref{algo:dominance} with three vehicles is illustrated in
Figure~\ref{fig:dominance}. The next result follows due to disjoint interiors of dominance regions, and due to Proposition~\ref{prop:pair}.

\begin{figure}[!h]
\centering
\includegraphics[width=0.7\linewidth]{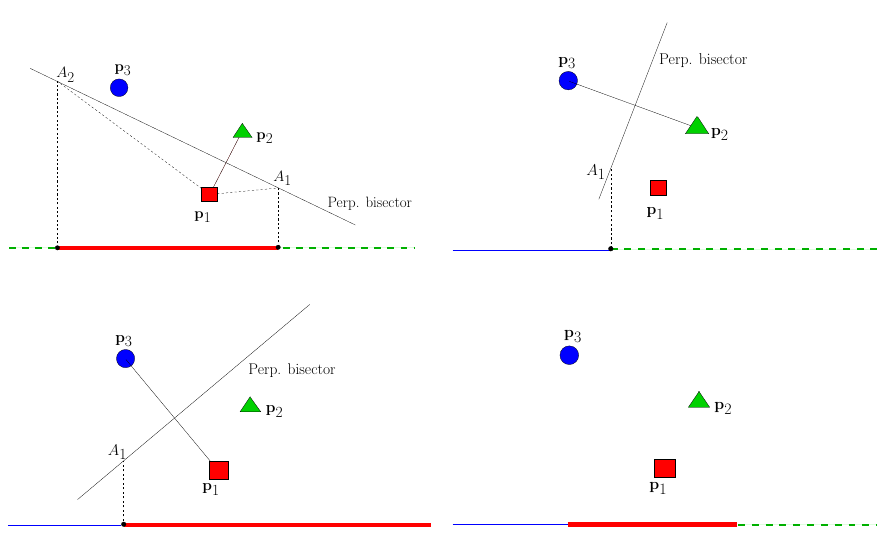}
\caption{Algorithm~\ref{algo:dominance} with three vehicles. The
  thick solid, the
  dashed and the thin solid dominance regions belong to the square (red), the
  triangle (green) and the round (blue) vehicles, respectively.}
\label{fig:dominance}
\end{figure}

\begin{proposition}[Optimality of dominance regions]\label{prop:dom}
Given distinct vehicle positions and a target arrival, 
%\begin{enumerate}

(i) the dominance regions generated by
  Algorithm~\ref{algo:dominance} form a partition of the generator.

(ii) The time taken to reach the target is minimized by the vehicle whose dominance region contains the target arrival location. 
%\end{enumerate}
\end{proposition}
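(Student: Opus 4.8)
The plan is to treat the two statements in sequence, deriving both from the pairwise characterization in Proposition~\ref{prop:pair} and the defining identity $\VV_i = \bigcap_{j \neq i} U_{ij}$ used in Algorithm~\ref{algo:dominance}. Statement (ii) will follow almost directly from these definitions; statement (i) splits into a \emph{coverage} claim and a \emph{zero-length overlap} claim, the latter being the only step that requires genuine work.

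First I would dispatch statement (ii), which I expect to be immediate. Fix distinct vehicle positions and a target arrival at $(x,0)$ with $x \in \VV_i$. By the definition of the dominance region as an intersection of pairwise dominance regions, $x \in U_{ij}$ for every $j \neq i$, so Proposition~\ref{prop:pair} gives $T(\p_i,x) \leq T(\p_j,x)$ for all $j$. Hence $T(\p_i,x) = \min_{j \in \{1,\dots,m\}} T(\p_j,x)$, i.e. vehicle $i$ intercepts in the least time among all vehicles, which is exactly (ii). For the coverage half of (i), I would show that every $x \in [0,W]$ lies in some $\VV_i$: since there are finitely many vehicles, the finite set $\{T(\p_1,x),\dots,T(\p_m,x)\}$ attains its minimum at some index $i^*$ (breaking ties arbitrarily), so $T(\p_{i^*},x) \leq T(\p_j,x)$ for all $j$, giving $x \in U_{i^* j}$ for every $j \neq i^*$ and therefore $x \in \VV_{i^*}$. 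This yields $\bigcup_i \VV_i = [0,W]$.

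The remaining and main step is to show that $\VV_i \cap \VV_j$ has zero length for distinct $i,j$. If $x \in \VV_i \cap \VV_j$ then $x \in U_{ij}$ and $x \in U_{ji}$, so $T(\p_i,x) \leq T(\p_j,x)$ and $T(\p_j,x) \leq T(\p_i,x)$, forcing $T(\p_i,x) = T(\p_j,x)$. Thus $\VV_i \cap \VV_j$ is contained in the equality set $\{x : T(\p_i,x) = T(\p_j,x)\}$, and it suffices to show this set is finite. Here I would invoke the reduction already used to derive Algorithm~\ref{algo:pair}: imposing equality in Eq.~\eqref{eq:pair} and clearing the radicals produces a quadratic in $x$, whose root set is an upper bound (modulo possible extraneous roots from squaring) on the equality set and has at most two elements.

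The care needed in this step is to confirm the quadratic is not identically zero and to handle its degenerate sub-cases. Non-triviality follows because $\p_i \neq \p_j$ forces $T(\p_i,\cdot) \not\equiv T(\p_j,\cdot)$ — for instance, matching the distinct asymptotic offsets of the two travel-time functions as $x \to \pm\infty$ would already require $\p_i = \p_j$. I expect this to be the only delicate point, since it must also absorb the perpendicular-bisector case $Y_i = Y_j$ (which yields a single boundary point) and the vanishing-leading-coefficient case $\sin^2\theta = v^2$ (which yields a linear, hence single-root, equation); in every case the root set stays finite and therefore has zero length. Combining coverage with zero-length overlap then establishes that $\{\VV_1,\dots,\VV_m\}$ is a partition in the sense of the footnote, completing (i).
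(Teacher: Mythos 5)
Your proposal is correct and takes essentially the same route as the paper, which justifies this proposition in a single sentence: statement (ii) from Proposition~\ref{prop:pair} together with the intersection definition $\VV_i = \bigcap_{j\neq i}U_{ij}$, and statement (i) from coverage plus the fact that dominance regions overlap only at their (zero-length) boundaries. Your quadratic-root argument for the equality set $\{x : T(\p_i,x)=T(\p_j,x)\}$ simply makes explicit the ``disjoint interiors'' fact the paper takes as given from the construction of Algorithm~\ref{algo:pair}.
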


The dominance region of a vehicle may be empty if the vehicle is very
far from the generating line as compared to the other vehicles
(cf. first part of Figure~\ref{fig:nonunif}). Conversely, all
vehicles have non-empty dominance regions when they are all in
$[0,W]\times\real_{\geq 0}$ and have the same $Y$-coordinates.

Since the dominance regions are set-valued functions of vehicle
positions, we now provide some background on continuity of set-valued
functions, and establish continuity of the dominance regions. This
property will be used to analyze our gradient-based procedure. Let
$\env := [0,W]\times\real_{\geq 0}$, let $\PP([0,W])$ be the set of all
subsets of $[0,W]$, let $\BB(r)$ be the closed ball of radius $r$ around
the origin, and let $+$ denote the Minkowski sum of two sets. The domain of
a set-valued map $F:X\rightrightarrows Z$ is the set of all $\q\in X$
such that $F(\q)\neq \emptyset$. $F$ is \emph{upper} (resp. \emph{lower})
\emph{semi-continuous} in its domain if, for every $\q$ in its domain and for
every $\epsilon>0$, there exists a $\delta>0$ such that for every
$\z\in \q +\BB(\delta)$, $F(\z)\subset F(\q)+B(\epsilon)$
(resp. $F(\q)\subset F(\z)+B(\epsilon)$). $F$ is \emph{continuous} if it is
both upper and lower semi-continuous.

The pairwise dominance region between $\p_i$ and $\p_j$ is a set
valued function $U_{ij}:\env^2\setminus\s_{ij} \rightrightarrows
\PP([0,W])$, where $\s_{ij}\subset \env^2$ is the set of coincident
locations for $\p_i$ and $\p_j$. Similarly, the dominance region
for vehicle $i$ is a set-valued map
$\VV_{i}:\env^m\setminus\s_{i}\rightrightarrows \PP([0,W])$,
where $\s_{i}\subset\env^m$ is the set of vehicle locations in which
at least one other vehicle is coincident with $\p_i$. We now
show that the dominance regions vary continuously with the vehicle
positions.

\begin{proposition}[Continuity of dominance regions]\label{prop:usc}
%\begin{enumerate}

(i) For every distinct $i$ and $j$ in the set $\{1,\dots,m\}$, the set
  valued map $U_{ij}$ is continuous in $\env^2\setminus\s_{ij}$.

(ii) For each vehicle $i \in \{1,\dots,m\}$, the set valued map $\VV_{i}$
is continuous on its domain.
%\end{enumerate}
\end{proposition}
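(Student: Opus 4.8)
The plan is to treat both set-valued maps uniformly as sublevel sets of a jointly continuous scalar function on the fixed compact interval $[0,W]$, and then to prove upper and lower semicontinuity separately. For part (i), write $g_{ij}(\p_i,\p_j,x):=T(\p_i,x)-T(\p_j,x)$, so that $U_{ij}=\{x\in[0,W]\;:\;g_{ij}\le 0\}$. Since $T$ is continuous on $\env\times[0,W]$, the function $g_{ij}$ is jointly continuous on $(\env^2\setminus\s_{ij})\times[0,W]$. For part (ii), observe that $\VV_i=\{x\in[0,W]\;:\;G_i\le 0\}$, where $G_i:=\max_{j\neq i}g_{ij}$ is again jointly continuous, being a finite pointwise maximum of continuous functions. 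This reduces the whole proposition to continuity of the sublevel-set map of a continuous function over a fixed compact domain.

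Upper semicontinuity is the easy direction, and I would prove it by one compactness argument covering both parts. Suppose it fails at some configuration $\q_0$: then there are $\epsilon>0$, a sequence $\z_n\to\q_0$, and points $x_n$ in $U_{ij}(\z_n)$ (resp. $\VV_i(\z_n)$) lying at distance at least $\epsilon$ from $U_{ij}(\q_0)$ (resp. $\VV_i(\q_0)$). By compactness of $[0,W]$ pass to $x_n\to x^*$; joint continuity gives $g_{ij}(\q_0,x^*)\le 0$ (resp. $G_i(\q_0,x^*)\le 0$), so $x^*$ belongs to the limiting set, contradicting the distance bound. This is consistent with $\VV_i=\bigcap_{j}U_{ij}$, since finite intersection preserves upper semicontinuity.

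The main obstacle is lower semicontinuity, which I would reduce to the density condition that the strict sublevel set is dense in the sublevel set. Indeed, if every boundary point $x$ of $U_{ij}(\q_0)$ with $g_{ij}(\q_0,x)=0$ is a limit of points $x'$ with $g_{ij}(\q_0,x')<0$, then continuity of $g_{ij}$ keeps $g_{ij}(\z,x')<0$ for $\z$ near $\q_0$, yielding the required nearby point of $U_{ij}(\z)$, and a finite subcover of the compact set $U_{ij}(\q_0)$ makes $\delta$ uniform. For part (i) I would verify this density from the structure in Algorithm~\ref{algo:pair}: when $Y_i\neq Y_j$ the boundary is the pair of distinct real roots of the quadratic, and the strict inequality $g_{ij}(\p_i,\p_j,X_i)<0$ (valid whenever $Y_i<Y_j$, since $T(\p_j,X_i)\ge Y_j/(1+v)>Y_i/(1+v)=T(\p_i,X_i)$, with $X_i\in[0,W]$) forces $U_{ij}$ to have nonempty interior and no isolated boundary points; the case $Y_i=Y_j$ gives a perpendicular-bisector half-line with the same property, and the roots and midpoint depend continuously on $(\p_i,\p_j)$ after clamping to $[0,W]$.

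For part (ii) the density condition is the genuinely delicate step and the place where I expect the real work. The boundary of $\VV_i$ is a union of arcs of the boundaries of the individual $U_{ij}$, and I would show that each active boundary point of $\VV_i$ is approachable from the common strict interior $\{G_i<0\}=\bigcap_{j}\operatorname{int}U_{ij}$. The subtle configurations are the ``pinch points'' where two constraints become simultaneously active on the generator, so that $\VV_i$ could be locally thin or reduce to a single point; there a small perturbation can shrink $\VV_i$ to the empty set on one side, which is exactly where lower semicontinuity is threatened. I would try to rule these out (or localize them) using that, near any active boundary point, the interior of $\VV_i$ on at least one side is governed by a single active constraint whose strict sublevel is dense, so the point remains a limit of interior points. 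This verification carries the weight of the proof; upper semicontinuity and the reduction to the density condition are routine by comparison.
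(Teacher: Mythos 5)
Your framework is genuinely different from the paper's, and the parts you complete are correct. You recast $U_{ij}$ and $\VV_i$ as sublevel sets of the jointly continuous functions $g_{ij}$ and $G_i=\max_{j\neq i}g_{ij}$ on the compact set $[0,W]$, prove upper semicontinuity by a subsequence/compactness argument, and reduce lower semicontinuity to density of the strict sublevel set in the sublevel set; all three steps are sound. By contrast, the paper proves part (i) by observing that the roots of the quadratic in Eq.~\eqref{eq:pair} vary continuously with $(\p_i,\p_j)$, and part (ii) by pushing semicontinuity through the finite intersection $\VV_i=\bigcap_{j\neq i}U_{ij}$, with lower semicontinuity dismissed as ``established similarly.'' Your route through $G_i$ is actually an improvement there, since lower semicontinuity is in general \emph{not} preserved by intersections of set-valued maps---the very step the paper waves through.

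The gap is that you never discharge the density hypothesis at the two places where it carries the load. For part (i) you verify it only for $Y_i<Y_j$ (interval type) and $Y_i=Y_j$; when $Y_i>Y_j$ one has $U_{ij}=[0,W]\setminus{]x_1,x_2[}$, and if a root sits exactly at an endpoint of the generator, say $x_1=0$, then $U_{ij}$ contains the isolated point $\{0\}$, which is not a limit of strict-sublevel points inside $[0,W]$: your density condition fails, and no repair is possible because lower semicontinuity genuinely fails there (perturb $\p_j$ so that $x_1<0$ and the component $\{0\}$ disappears). For part (ii)---which you yourself say carries the weight of the proof---you only record an intention (``I would try to rule these out''), and the claim you hope to use, that near any active boundary point one side of $\VV_i$ is governed by a single active constraint with dense strict sublevel, is exactly what breaks at a pinch point: with $U_{ij}=[0,c]$ and $U_{ik}=[c,W]$ one gets $\VV_i=\{c\}$, $\{G_i\leq 0\}=\{c\}$ but $\{G_i<0\}=\emptyset$, and an arbitrarily small perturbation makes $\VV_i$ empty, so $\VV_i$ is not lower semicontinuous at that configuration even though it lies in the domain of $\VV_i$. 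So your proposal establishes upper semicontinuity throughout but leaves the substantive half of part (ii) open; closing it requires either excluding these degenerate configurations or proving they cannot occur, a qualification that your write-up does not supply (and that, to be fair, the paper's own two-line proof silently glosses over as well).
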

\begin{proof}
The roots of Eq.~\eqref{eq:pair} which is a quadratic in $x$, vary
continuously with $\p_i$ and $\p_j$. Thus, the map $U_{i,j}$ is
continuous in $\env^2\setminus\s_{ij}$.

The domain of $\VV_i$ is contained in the domain of $U_{ij}$ for every
$j\neq i$. From part (i) of this Proposition, for every $j\neq i$,
the set-valued map $U_{ij}$ is upper semi-continuous in
$\env^2$. Thus, for every $j\neq i$, at every $\q$ in the domain of
$\VV_i$ and for every $\epsilon>0$, there exist $\delta_{ij}>0$ such
that for every $\z\in \q +\BB(\delta_{ij})$, $U_{ij}(\z)\subset
U_{ij}(\q)+B(\epsilon)$. Given an $\epsilon>0$, by the choice of
$\delta_i=\min\{\delta_{ij}, \forall j\neq i\}$, we obtain that for
every $\z\in \q +\BB(\delta_{i})$, $\VV_{i}(\z)\subset
\VV_{i}(\q)+B(\epsilon)$. Thus $\VV_i$ is upper semi-continuous. Lower
semi-continuity of $\VV_i$ is established similarly and the result follows.
\end{proof}

\subsection{Minimizing the Expected Constrained Travel Time} \label{secn:exptime}
%We now address the multiple vehicle placement problem.
For distinct vehicle locations, Eq.~\eqref{eq:multtime} can be written
as
\begin{equation}\label{eq:texp}
\subscr{T}{exp}(\p_1,\dots,\p_m) = \sum_{i=1}^m\int_{\VV_i}T(\p_i,x)\phi(x)dx,
\end{equation}
where $\VV_i$ is the dominance region of the $i$th vehicle. The
gradient of $\subscr{T}{exp}$ is computed using the following formula,
which allows each vehicle to compute the gradient of
$\subscr{T}{exp}$ by integrating the gradient of $T$ over
$\VV_i$. 
\begin{lemma}[Gradient computation]\label{lem:grad_exptime}
  For all vehicle configurations such that no two vehicles are at
  coincident locations, the gradient of the expected time with respect to
  vehicle location $\p_i$ is
  \[
  \frac{\partial \subscr{T}{exp}}{\partial \p_i} = \int_{\VV_i}\frac{\partial T}{\partial \p_i}(\p_i,x)\phi(x)dx. 
  \]
\end{lemma}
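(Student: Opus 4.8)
The plan is to differentiate the finite sum in Eq.~\eqref{eq:texp} term by term and to show that all contributions arising from the motion of the dominance-region boundaries cancel, leaving only the integral of $\partial T/\partial\p_i$ over $\VV_i$. First I would fix a configuration $(\p_1,\dots,\p_m)$ with all locations distinct and perturb only $\p_i$. For the $i$th summand the integrand $T(\p_i,x)\phi(x)$ depends on $\p_i$ and the domain $\VV_i$ also depends on $\p_i$; applying the one-dimensional generalized Leibniz (Reynolds transport) rule yields the desired term $\int_{\VV_i}(\partial T/\partial\p_i)(\p_i,x)\phi(x)\,dx$ together with a sum of boundary terms, each equal to the integrand evaluated at a moving endpoint of $\VV_i$ times the velocity of that endpoint. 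For every summand with $j\neq i$, the integrand $T(\p_j,x)\phi(x)$ does not depend on $\p_i$, so differentiation produces only boundary terms, coming from the motion of those endpoints of $\VV_j$ that are shared with $\VV_i$.

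The crucial step is the pairwise cancellation of these boundary terms. By Proposition~\ref{prop:dom} the dominance regions partition $[0,W]$, so every interior endpoint of $\VV_i$ is also an endpoint of exactly one neighboring region $\VV_j$, located at a boundary point $x_{ij}$ satisfying $T(\p_i,x_{ij})=T(\p_j,x_{ij})$ by Definition~\ref{def:dominance}. At such a point the two integrands agree, $T(\p_i,x_{ij})\phi(x_{ij})=T(\p_j,x_{ij})\phi(x_{ij})$, while $x_{ij}$ is a right endpoint of one of the two regions and a left endpoint of the other, so it enters the two Leibniz boundary terms with opposite signs. Hence the boundary contribution from the $i$th summand at $x_{ij}$ and the contribution from the $j$th summand at the same point are equal in magnitude and opposite in sign, and cancel. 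The remaining endpoints of $\VV_i$ lie at the fixed segment ends $x=0$ or $x=W$; these do not move with $\p_i$ and contribute nothing. Summing over all boundaries, every boundary term is annihilated and only $\int_{\VV_i}(\partial T/\partial\p_i)\phi\,dx$ survives.

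The main obstacle is making the transport-rule step rigorous, which requires that each boundary point $x_{ij}$ depend differentiably on $\p_i$. Continuity of the boundaries is already supplied by Proposition~\ref{prop:usc}, but for the explicit computation I would invoke the implicit function theorem on $g(\p_i,x):=T(\p_i,x)-T(\p_j,x)=0$: since the two travel-time profiles cross transversally (that is, $\partial g/\partial x\neq 0$) at a generic boundary point, $x_{ij}(\p_i)$ is locally $C^1$ and its velocity is well defined, so that the boundary terms in the preceding paragraph are legitimate objects. Care is also needed with the combinatorial structure of $\VV_i$, which may consist of several intervals and whose endpoints may cross the segment ends $0$ or $W$; such structural transitions occur only on a measure-zero set of configurations, and on the open dense complement, where $\VV_i$ has a stable interval structure, the cancellation argument applies verbatim and establishes the stated formula.
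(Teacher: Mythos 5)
Your core mechanism is the same as the paper's: differentiate Eq.~\eqref{eq:texp} term by term with the Leibniz rule, and cancel the boundary terms pairwise using the fact that $T(\p_i,\cdot)$ and $T(\p_j,\cdot)$ agree at a shared boundary point (while fixed endpoints at $0$ and $W$ contribute nothing). Up to that point your argument reproduces Case~1 of the paper's proof.

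The gap is in how you dispose of the non-generic configurations. Your cancellation step rests on the claim that every interior endpoint of $\VV_i$ is an endpoint of \emph{exactly one} neighboring region $\VV_j$; this fails when three or more dominance regions share a boundary point, which genuinely occurs for $m\geq 3$ vehicles. You acknowledge such configurations but exclude them as a measure-zero set, so your argument proves the formula only on an open dense set of configurations. That does not prove the lemma as stated: it is a pointwise identity asserted for \emph{all} configurations with no coincident vehicles, and the excluded configurations cannot be discarded --- symmetric and degenerate configurations are precisely the ones the convergence analysis of Algorithm~\ref{algo:lloyd} must handle (critical configurations are typically non-generic), and at such points one must still establish that $\subscr{T}{exp}$ is differentiable at all. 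The paper closes this with its Case~2: at a boundary point shared by $\p_i$, $\p_j$ and $\p_r$, it perturbs $\p_i$ by a small $\epsilon$ in an arbitrary direction, notes that the boundary point moves either left or right, and shows that in either case the boundary term is cancelled by whichever neighbor's region absorbs the displaced interval; hence the one-sided derivatives agree from every direction and the stated formula holds even there. To repair your proof you need this (or an equivalent) argument at triple points rather than an appeal to genericity; your implicit-function-theorem step suffers from the same weakness, since transversality is likewise only assumed ``at a generic boundary point.''
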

Akin to similar results in~\cite{FB-JC-SM:09}, the proof involves
writing the gradient of $\subscr{T}{exp}$ as a sum of two contributing
terms. The first is the final expression, while the second is a number
of terms which cancel out due to continuity of $T$ at the boundaries
of dominance regions.

For $\z \in \real^2$, let
$\map{\sat}{\real^2}{\real^2}$ denote the saturation function, i.e.,
if $\norm{\z}\leq 1$, then $\sat(\z) = \z$; otherwise, $\sat(\z) =
\z/\norm{\z}$. Inspired by the established Lloyd algorithm
(cf.~\cite{FB-JC-SM:09}), we present a discrete-time descent approach
in Algorithm~\ref{algo:lloyd}. The idea is to minimize
$\subscr{T}{exp}$ by making each vehicle follow gradient descent over
its dominance region. If the dominance region is empty, then the
vehicle moves towards the generator, until it obtains a non-empty
dominance region.

\begin{algorithm}[h] 
  %\dontprintsemicolon %
  % \nocaptionofalgo %
  \KwAssumes{Distinct locations $\{\p_1,\dots,\p_m\}\in\env^m$} %

  \ForEach{\textup{time} $t\in\natural$}%
  { Compute $\VV_i(t)$ by Algorithm~\ref{algo:dominance} as a function of
    $\{\p_1(t),\dots,\p_m(t)\}$\\
    \eIf{$\VV_i(t)$ \textup{is empty},}{%
      Move in unit time to $(X_i,Y_i-\min\{1,Y_i\})$\; %
    }{ %
      For $\tau\in[t,t+1]$, move according to
      $\displaystyle{\dot{\p}_i(\tau) = -\sat\Big(\int_{\VV_i(t)}\frac{\partial}{\partial
        \p_i}T(\p_i(\tau),x)\phi(x)dx\Big)}$ %
}
}
\caption{\bf Lloyd descent for vehicle $i$}
\label{algo:lloyd}
\end{algorithm}

Next, we define critical configurations for the vehicles, which means
that every vehicle is at the unique minimizer of the cost evaluated
over its dominance region.
\begin{definition}[Critical configuration]\label{def:cdrc}
  A set of locations $\{\p_1,\dots,\p_m\}$ is a \emph{critical
    configuration} if,
  \begin{equation*}
  \p_i = \argmin_{\z\in\env}\int_{\VV_i} T(\z,x)\phi(x)dx,    
  \end{equation*}
  for all $i \in \until{m}$, where $\{\VV_1,\dots,\VV_m\}$ is the dominance region partition induced
  by $\{\p_1,\dots,\p_m\}$.
\end{definition}

We now state the main result of this section, that gives
sufficient conditions under which the vehicles asymptotically reach
a critical configuration using Algorithm~\ref{algo:lloyd}.
\begin{theorem}[Convergence of Lloyd descent]\label{thm:lloyd}
  Let $\map{\gamma}{\natural}{\real^{2m}}$ be the evolution of the $m$ vehicles
  according to Algorithm~\ref{algo:lloyd} and assume that no two vehicle
  locations become coincident in finite time or asymptotically. The
  following statements hold:
%\begin{enumerate}

(i) the expected travel time $t\mapsto\subscr{T}{exp}(\gamma(t))$ is a 
  non-increasing function of time;

(ii) if the dominance region $\VV_i$ of any vehicle $i$ is empty at some
  time, then $\VV_i$ will be non-empty within a finite time; and

(iii) if there exists a time $t$ such that every dominance region is
  non-empty for all times subsequent to $t$, then the vehicle
  locations converge to the set of critical dominance region
  configurations.
%\end{enumerate}
\end{theorem}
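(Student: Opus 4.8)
The plan is to regard Algorithm~\ref{algo:lloyd} as a discrete-time dynamical system $\p(t+1)=F(\p(t))$ on $\env^m$ minus the coincidence set, to use $\subscr{T}{exp}$ as a Lyapunov function, and to invoke a discrete-time LaSalle invariance principle (in the spirit of~\cite{FB-JC-SM:09}). The organizing device is the \emph{fixed-partition cost}
\[
H(\p_1,\dots,\p_m;\WW_1,\dots,\WW_m):=\sum_{i=1}^m\int_{\WW_i}T(\p_i,x)\phi(x)\,dx,
\]
which I would evaluate both at the partition carried across a step and at the one recomputed at the step's end. Two structural facts underlie the argument. First, for \emph{fixed} vehicle positions the dominance partition minimizes $H$ over all partitions---this is exactly Proposition~\ref{prop:dom}, since it assigns each $x$ to the fastest vehicle, so $H(\p;\VV(\p))=\subscr{T}{exp}(\p)$. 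Second, for a \emph{fixed} partition each summand of $H$ is a single-vehicle cost of the form~\eqref{eq:C} integrated over $\WW_i\subseteq[0,W]$, hence strictly convex in $\p_i$ with gradient given by Lemma~\ref{lem:grad_exptime} and the descent and invariance properties of Lemma~\ref{lem:cvx} and Theorem~\ref{thm:opt}.

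For statement (i) I would chain two inequalities over $[t,t+1]$. Holding the partition fixed at $\VV(t):=\VV(\p(t))$, each vehicle with nonempty region runs saturated gradient descent on its strictly convex summand and so does not increase it, while each vehicle with empty region contributes zero independently of its motion; hence $H(\p(t+1);\VV(t))\le H(\p(t);\VV(t))=\subscr{T}{exp}(\p(t))$. Recomputing the partition can only lower $H$ by Proposition~\ref{prop:dom}, so $\subscr{T}{exp}(\p(t+1))=H(\p(t+1);\VV(t+1))\le H(\p(t+1);\VV(t))$, and composing gives monotonicity. For statement (ii), while $\VV_i$ stays empty the vehicle only decreases $Y_i$ by $\min\{1,Y_i\}$ each step, so $Y_i$ reaches $0$ in finitely many steps; at $Y_i=0$ with $X_i\in[0,W]$ one has $T(\p_i,X_i)=0$, whereas for any distinct $\p_j$ the inequality $T(\p_j,X_i)>0$ reduces to $(1-v^2)\norm{\p_j-(X_i,0)}^2>0$, which holds since $\p_j\neq\p_i$; thus $X_i$, and by the continuity of Proposition~\ref{prop:usc} a neighbourhood of it, lies in $\VV_i$, so $\VV_i$ is nonempty by that time, if not earlier.

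Statement (iii) is the crux, and I would verify the three ingredients of the LaSalle principle. For \emph{precompactness}, the $X$-partial of each summand, as in~\eqref{eq:partx}, points inward at $X=0,W$, making $[0,W]$ invariant in each $X_i$, while the $Y$-partial, of the form in~\eqref{eq:party}, becomes strictly positive once $Y_i$ exceeds the uniform threshold $Y_{\max}:=W\sqrt{c^2b/(a^2-c^2)}$ (finite because $a>c$), so each $Y_i$ is driven below $Y_{\max}$ in finite time and kept there; the trajectory therefore eventually lies in the compact set $([0,W]\times[0,Y_{\max}])^m$. For \emph{continuity of $F$}, the dominance regions vary continuously with $\p$ by Proposition~\ref{prop:usc} and $\partial T/\partial\p_i$ is bounded, so the saturated descent vector field is jointly continuous in the state and in the region, and continuous dependence of ODE solutions on parameters makes the time-one map continuous off the coincidence set, which the hypotheses keep the trajectory and its limit set away from. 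For \emph{identification of the invariant set}, strict convexity makes the chain in (i) tight only when the fixed-partition descent produces no decrease, i.e.\ when every $\p_i$ already minimizes $\int_{\VV_i}T(\cdot,x)\phi\,dx$; by Definition~\ref{def:cdrc} these are exactly the critical configurations, and they are fixed points of $F$ and hence invariant, so the LaSalle principle yields convergence to their set. I expect the main obstacle to be a rigorous proof of the continuity of $F$: converting the set-valued continuity of the dominance regions into honest continuity of the region-integrals (which needs the region boundaries, the continuously varying roots of the quadratic in~\eqref{eq:pair}, to sweep zero measure) and then propagating this through the flow is the delicate step, whereas the monotonicity and the critical-set identification are comparatively direct.
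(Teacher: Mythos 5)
Your proposal is correct and follows the same skeleton as the paper's proof: both treat Algorithm~\ref{algo:lloyd} as a discrete-time dynamical system, take $\subscr{T}{exp}$ as the non-increasing function, invoke the discrete-time LaSalle invariance principle of~\cite{FB-JC-SM:09}, obtain monotonicity from the optimality of the partition (Proposition~\ref{prop:dom}) plus gradient descent, get continuity of the time-one map from Proposition~\ref{prop:usc}, and identify the invariant set with the critical configurations of Definition~\ref{def:cdrc}. The genuine differences are in two sub-arguments. For boundedness of the evolutions, the paper argues by contradiction and leans on the theorem's hypotheses: if one vehicle's $Y$-coordinate diverged while another stayed bounded, its dominance region would eventually be empty (contradicting the assumption of (iii)); if all diverged, $\subscr{T}{exp}$ would grow without bound (contradicting (i)). You instead exhibit the explicit threshold $Y_{\max}=W\sqrt{c^2b/(a^2-c^2)}$ above which the $Y$-gradient is nonnegative; this is more quantitative and does not use the non-emptiness hypothesis at all, but your claim that each $Y_i$ is \emph{driven below} $Y_{\max}$ in finite time is stronger than what your argument shows---when $\VV_i$ carries zero $\phi$-mass the gradient vanishes and the vehicle does not move---and the correct, and sufficient, statement is that $Y_i$ cannot increase while above $Y_{\max}$, so $Y_i(t)\leq\max\{Y_i(0),Y_{\max}+1\}$, which is all that LaSalle requires. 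For statement (ii), you supply the justification (positivity of $T(\p_j,X_i)$ for every $\p_j\neq(X_i,0)$, hence a neighborhood of $X_i$ lies in $\VV_i$ once vehicle $i$ reaches the generator) that the paper merely asserts. Your fixed-partition cost $H$ is the standard Lloyd bookkeeping and is equivalent to the paper's step-by-step argument for (i); likewise, your tightness-of-the-chain identification of the invariant set matches the paper's ``critical configurations are fixed, non-critical ones strictly decrease'' argument.
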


The assumptions of non-coincidence of vehicle locations and the
non-emptiness of the dominance regions after a finite time ensure that
the dominance regions are continuous functions of vehicle
positions. The continuity of dominance regions in turn allows the
LaSalle Invariance principle to be applicable. Further, it may be
possible that the dominance region of a vehicle keeps alternating
between being empty and non-empty under the action of
Algorithm~\ref{algo:lloyd}. However, it was observed through numerous
simulations (e.g., see Figure~\ref{fig:nonunif}) that after a finite
time, the dominance region of every vehicle remained non-empty.

\begin{proof}[Proof of Theorem~\ref{thm:lloyd}]
 We begin by showing statement (i). In every iteration of
 Algorithm~\ref{algo:lloyd}, step~\algofont{2} does not increase the
 expected time $\subscr{T}{exp}$ due to the optimality of the
 dominance region partition, by
 Proposition~\ref{prop:dom}. Step~\algofont{4} does not change the
 $\subscr{T}{exp}$ as the associated dominance region is
 empty. Finally, step~\algofont{6} does not increase $\subscr{T}{exp}$
 as the vehicle is moving along the gradient descent flow of
 $\subscr{T}{exp}$. Thus, the expected time is non-increasing under
 Algorithm~\ref{algo:lloyd}.

 Statement (ii) follows from the fact that whenever $\VV_i=\emptyset$ for
 vehicle $i$, due to step~\algofont{4}, vehicle $i$ reaches the
 generator after finite time and therefore has a non-empty $\VV_i$.

 For non-empty $\VV_i$, let
 $\map{\mathcal{A}}{\X\times\PP([0,W])}{\X}$, be the flow map of the
 differential equation at step~\algofont{6} from time $t$ to time
 $t+1$. For statement (iii), consider the discrete-time dynamical system
 given by the tuple $(\X,\X_0,\mathcal{A})$, where $\X = \env^m$ and
 $\X_0\in \env^m$ is the set of initial vehicle positions.

We now apply the discrete-time LaSalle Invariance Principle
  (Theorem 1.19 in \cite{FB-JC-SM:09}), for which we verify
  the four assumptions as follows.

  1. Existence of a positively invariant set: At every iteration of
  step~\algofont{6}, each vehicle follows saturated gradient descent
  of a cost function belonging to the class of Eq.~\eqref{eq:C} over
  its dominance region fixed for the iteration. By the first statement of
  Theorem~\ref{thm:opt}, each vehicle remains in $\env$ throughout
  the iteration, and therefore at all times. Thus, the set $\env^m$ is
  positively invariant for the system $(\X,\X_0,\mathcal{A})$.

  2. Existence of a non-increasing function along $\mathcal{A}$:
  $\subscr{T}{exp}$ is non-increasing along $\mathcal{A}$, by
  statement (i) of this theorem.

  3. Boundedness of all evolutions of $(\X,\X_0,\mathcal{A})$:
  Gradient descent keeps the $X$ coordinates bounded in $[0,W]$. It
  remains to show that the $Y$-coordinates of all vehicles remain
  bounded. Let us suppose the contrary. Then, there are two cases: (a)
  there exists a sequence of times on which at least one vehicle has
  its location bounded and at least one other vehicle, say vehicle
  $k$, has its $Y$-coordinate growing without limits; or (b) there
  exists a sequence of times on which the $Y$-coordinates of all
  vehicles grow unbounded. In case (a), after finite time, the
  dominance region $\VV_k$ becomes empty, thus contradicting the
  assumption of statement (iii) of this theorem. If case (b) occurs,
  then there exists a subsequence of $\subscr{T}{exp}$ which grows
  unbounded, thus contradicting statement (i) of this theorem. Thus,
  all evolutions of $(\X,\X_0,\mathcal{A})$ are bounded.

  4. Continuity of $\subscr{T}{exp}$ and $\mathcal{A}$: Continuity
  of $\subscr{T}{exp}$ follows from Eq.s~\eqref{eq:time} and
  \eqref{eq:texp}. To verify continuity of $\mathcal{A}$, note that
  whenever $\VV_i$ is non-empty, by Proposition~\ref{prop:usc},
  $\VV_i$ is continuous with respect to vehicle locations.  Thus, as
  long as $\VV_i$ is non-empty, $\mathcal{A}$ is continuous as the
  integrand is continuous with respect to vehicle locations.%, and so is
  %the region of integration $\VV_i$.
  
  By LaSalle Invariance Principle, the evolutions of 
  $(\X,\X_0,\mathcal{A})$ converge to a set of the form
  $\subscr{T}{exp}^{-1}(\kappa)\intersection \mathcal{M}$, where
  $\kappa$ is a real constant and $\mathcal{M}$ is the largest
  positively invariant set in $\{x\in \X \,|\,
  \subscr{T}{exp}(\mathcal{A}(x))=\subscr{T}{exp}(x)\}$. Since
  $\subscr{T}{exp}$ remains constant under action of $\mathcal{A}$ for
  the set of critical configurations, it is contained in a set of the
  form $\subscr{T}{exp}^{-1}(\kappa)\intersection \mathcal{M}$. If a
  set of vehicle positions is not critical, then $\subscr{T}{exp}$
  strictly decreases under the action $\mathcal{A}$, and therefore the
  set of vehicle positions is not contained in a set of
  $\subscr{T}{exp}^{-1}(\kappa)\intersection \mathcal{M}$. Thus, the
  vehicles converge to the set of critical configurations.
\end{proof}   

The next result gives a simple condition to identify an unstable critical
 configuration, which is an unstable equilibrium of
Algorithm~\ref{algo:lloyd}. Figure~\ref{fig:critical} illustrates this
result.

\begin{lemma}[Disconnected partitions are unstable]\label{lem:unstable}
A critical configuration is unstable if some vehicle has a
disconnected dominance region.
\end{lemma}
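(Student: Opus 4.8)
The plan is to show that the given critical configuration is not a local minimizer of $\subscr{T}{exp}$, and then to convert this into dynamical instability using the descent structure of Algorithm~\ref{algo:lloyd}. First I would establish the reduction. By statement (i) of Theorem~\ref{thm:lloyd} the map $t\mapsto\subscr{T}{exp}(\gamma(t))$ is non-increasing, and by statement (iii) trajectories converge to the set of critical configurations. Hence if I can exhibit, arbitrarily close to the given critical configuration $\p^*$, a configuration $\p'$ with $\subscr{T}{exp}(\p')<\subscr{T}{exp}(\p^*)$, then the trajectory issued from $\p'$ has cost bounded above by $\subscr{T}{exp}(\p')$ for all time and therefore cannot converge back to $\p^*$, since continuity of $\subscr{T}{exp}$ would force the limiting cost to equal $\subscr{T}{exp}(\p^*)$, a contradiction. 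As $\p'$ may be taken arbitrarily near $\p^*$, the configuration $\p^*$ fails to be Lyapunov stable. Thus the whole argument reduces to constructing such nearby lower-cost configurations.

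Next I would pin down the geometry forced by a disconnected dominance region. If $\VV_i$ is disconnected, write $\VV_i = A\cup B$ with $A,B$ separated by a gap. The boundary analysis behind Algorithm~\ref{algo:pair}, in particular the $Y_i\ge Y_j$ branch where $U_{ij}=[0,W]\setminus{]x_1,x_2[}$, shows that the gap must be occupied by the dominance region of at least one other vehicle, say $\p_j$, sitting closer to the generator. I would record that on the common boundary of $\VV_i$ and $\VV_j$ one has $T(\p_i,x)=T(\p_j,x)$, which I will need later to cancel boundary contributions, exactly as in the proof of Lemma~\ref{lem:grad_exptime}.

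The analytic heart is that a first-order perturbation is inconclusive, so the instability is a second-order effect. At $\p^*$ the full gradient of $\subscr{T}{exp}$ vanishes: by Lemma~\ref{lem:grad_exptime} and Definition~\ref{def:cdrc}, $\partial\subscr{T}{exp}/\partial\p_i=\int_{\VV_i}\partial T/\partial\p_i\,\phi\,dx=0$ for every $i$. Moreover, moving vehicle $i$ alone within its own region is stabilizing, since $\z\mapsto\int_{A\cup B}T(\z,x)\phi(x)dx$ is strictly convex by Lemma~\ref{lem:cvx}. The decrease must therefore come from a coordinated perturbation that also moves the partition. I would displace $\p_i$ toward lobe $A$ and simultaneously displace the gap vehicle $\p_j$ so that it absorbs part of lobe $B$, and then compute the directional second derivative of $\subscr{T}{exp}$ along this joint direction, using $T(\p_i,x)=T(\p_j,x)$ on the moving boundary to kill the first-order boundary terms. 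The driving intuition, which I would make precise through this computation, is that strict convexity (Lemma~\ref{lem:cvx}) prevents the single placement $\p_i$ from being simultaneously optimal for the separated pieces $A$ and $B$; letting $\p_i$ specialize to $A$ while the neighbor $\p_j$ takes over part of $B$ captures a strictly positive inefficiency gap, and this is what renders the chosen directional second derivative strictly negative, producing nearby configurations of strictly smaller cost.

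I expect the main obstacle to be this second-order computation: differentiating an integral whose domain, the dominance boundary solving the quadratic in \eqref{eq:pair}, and whose integrand both depend on the perturbation parameter, and then rigorously signing the result. Controlling how the boundary point moves with $(\p_i,\p_j)$ and verifying that the first-order boundary terms cancel while the genuine second-order interior gain survives is the delicate part. A secondary technical point is upgrading ``the trajectory does not converge back to $\p^*$'' into ``the trajectory leaves a fixed neighborhood'', for which I would invoke that strict convexity makes the critical configurations isolated, so that convergence to a strictly-lower-cost critical configuration forces exit from a neighborhood of $\p^*$.
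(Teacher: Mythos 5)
Your outline (show $\p^*$ is not a local minimizer of $\subscr{T}{exp}$, then convert descent plus non-minimality into dynamical instability) is reasonable, but it contains two genuine gaps. The first is in the conversion step: failing to converge back to $\p^*$ is weaker than instability, and your repair --- ``strict convexity makes the critical configurations isolated'' --- does not follow from anything in the paper. Lemma~\ref{lem:cvx} gives strict convexity of the \emph{fixed-partition}, single-vehicle cost $\z\mapsto\int_{\VV_i}T(\z,x)\phi(x)dx$; the full cost $\subscr{T}{exp}$ is non-convex precisely because the partition moves with the positions (which is why Theorem~\ref{thm:lloyd} only yields convergence to the \emph{set} of critical configurations). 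Isolation of critical configurations is an unproved claim, and without it your contradiction evaporates: a trajectory started at your lower-cost point $\p'$ could simply converge to a different, lower-cost critical configuration arbitrarily close to $\p^*$, never leaving a fixed neighborhood. You would also need to verify the hypotheses of Theorem~\ref{thm:lloyd}(iii) (non-coincidence, eventually non-empty dominance regions) for each perturbed trajectory before invoking it.

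The second gap is that your analytic heart --- the signed second directional derivative along a coordinated two-vehicle perturbation --- is never carried out, and the premise motivating it is incorrect. You argue that single-vehicle perturbations are ``inconclusive'' because of convexity, but that conflates fixed-partition motion with motion under the true cost, where the dominance boundaries move too. The paper's proof perturbs exactly \emph{one} vehicle: the gap vehicle $\p_1$ with connected region $\VV_1=[A,B]$ is displaced by $\delta X$ in the $X$-direction, both boundary points shift to $A-\delta A$ and $B-\delta B$, and then criticality of $\p_2$ (its gradient integral over the old region $[0,A]\union[B,W]$ vanishes) reduces the new gradient of $\p_2$'s cost to two strip integrals, $\int_{A-\delta A}^{A}I_2\,dx-\int_{B-\delta B}^{B}I_2\,dx$, whose sign is pinned down because $X_2-x$ has opposite signs on the two strips. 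At first order in the perturbation the gradient field therefore drives $\p_1$ and $\p_2$ apart, giving instability with no second-derivative computation, no two-vehicle coordination, and no appeal to isolation of critical points. By contrast, your route still requires differentiating twice an integral with parameter-dependent boundaries and rigorously signing the result; your only argument for the sign is the intuition that $\p_i$ ``cannot be simultaneously optimal for the separated pieces,'' which is not a proof --- at the critical configuration $\p_i$ \emph{is} optimal for $A\union B$, and the strict decrease comes entirely from the induced re-partitioning, exactly the effect your proposal defers to an unperformed computation.
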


The proof involves perturbing the position of a vehicle with a
disconnected dominance region, and then showing that the 
gradient in the $X$ direction for that vehicle takes the vehicle away
from the equilibrium configuration.

\subsection{Simulations}\label{secn:simu}

We now present some simulations of Algorithm~\ref{algo:lloyd}.

\noindent \emph{Examples of critical locations:} We consider two
vehicles, and a uniform target generation density, i.e.,
$\phi(x) = 1/W$. From initial locations such as in the leftmost of
Figure~\ref{fig:critical} wherein both vehicles having the same
$X$-coordinate of $W/2$, but different $Y$-coordinates, the vehicles
asymptotically approach the configuration in the center
figure. However, a small perturbation to the vehicles leads to the
configuration in the rightmost figure. Thus, this simulation
illustrates Lemma~\ref{lem:unstable}. However, from most initial conditions,
the vehicles converged to a critical configuration as in the rightmost
figure.

\begin{figure}[h]
\centering 
\includegraphics[width=0.7\linewidth]{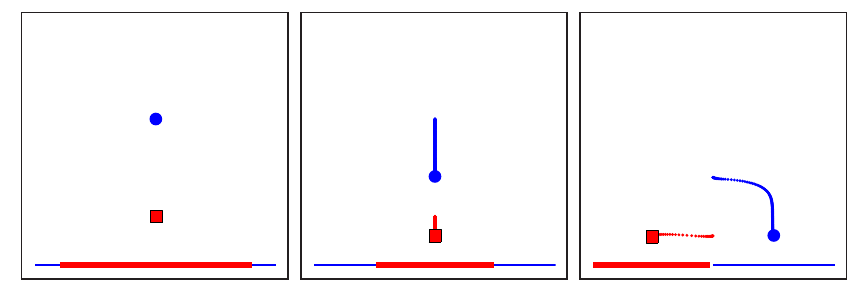}
%\fbox{\includegraphics[width=.28\linewidth]{stable1}}
%\fbox{\includegraphics[width=.28\linewidth]{stable2}} 
%\fbox{\includegraphics[width=.28\linewidth]{stable3}}\\
\caption{Algorithm~\ref{algo:lloyd} for uniform arrival density. The thick
  solid (red) and the thin solid (blue) dominance regions belong to the
  square and the round vehicles respectively. %  {\color{red} From Francesco:
    % thick versus thin was not clear enough for me, in my printout}
  The
  vehicles first tend to an critical configuration (center figure). A
  perturbation to their positions makes them move to a stable critical
  configuration.} %
\label{fig:critical} %
\end{figure}%

\noindent \emph{Non-uniform probability distribution:}
We consider three vehicles and the arrival probability
density function,
\begin{align*}
\phi(x) = \begin{cases} {8x}/{W^2}, &\text{ if } x\in{[0,W/4]}, \\
{2}/{W} - {8}(x-{W}/{4})/(3W^2), &\text{ if } x\in{]W/4,W]}.
\end{cases}
\end{align*}   

Initially, the round vehicle had an empty dominance region
(Figure~\ref{fig:nonunif}, left). After finite time, the round
vehicle obtained a non-empty dominance region
(Figure~\ref{fig:nonunif}, center), after which all vehicles continued
to have non-empty dominance regions. Thus, by Theorem~\ref{thm:lloyd},
the vehicles converged to a critical configuration
(Figure~\ref{fig:nonunif}, right).

\begin{figure}[h]
\centering 
\includegraphics[width=0.7\linewidth]{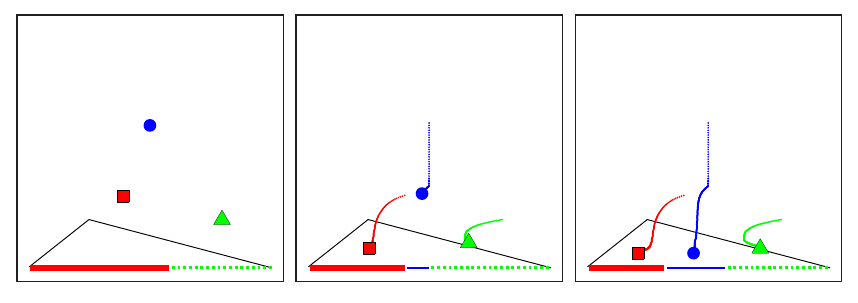}
%\fbox{\includegraphics[width=.28\linewidth]{nonunif1}}
%\fbox{\includegraphics[width=.28\linewidth]{nonunif2}} 
%\fbox{\includegraphics[width=.28\linewidth]{nonunif3}}\\
\caption{Algorithm~\ref{algo:lloyd} for non-uniform arrival density
  (black line). The dashed dominance region belongs to the triangle
  vehicle. Initially, the round vehicle has no dominance region.} %
\label{fig:nonunif} %
\end{figure}%

\subsection{Adversarial target motion: An insight} \label{secn:adv} We
now show that the partitioning procedure can be applied to the case of
adversarial target. However, the dominance regions are difficult to
characterize analytically. To see this, consider
only two vehicles $\p_1$ and $\p_2$, and suppose the target is generated at
$\q$. In order to maximize the vertical height (resp. intercept
time), the optimal strategy for the target is (cf.~\cite{RI:65}):

(i) Compute the Apollonius circles (ACs) with respect to the pursuer
  locations $\p_1$ and $\p_2$.

(ii) Move to the point with highest $Y$ coordinate
  (resp. furthest point) in the intersection of the ACs.

  The strategy is illustrated in Figure~\ref{fig:adv}. We partition
  the generator into: (i) $\VV_1$ (resp. $\VV_2$): Set of all locations
  $\q$ for which the AC of the target with respect to pursuer $\p_1$
  (resp. $\p_2$) is entirely contained in the AC with respect to
  pursuer $\p_2$ (resp. $\p_1$); and (ii) $\VV_{1,2}:= G \setminus
  (\VV_1\union \VV_2)$.
%\end{itemize}

%Figure~\ref{fig:adv} shows examples when the arrival location is in either $\VV_1$ or in $\VV_{1,2}$.
\begin{figure}[!h]
\centering
\includegraphics[width=0.8\linewidth]{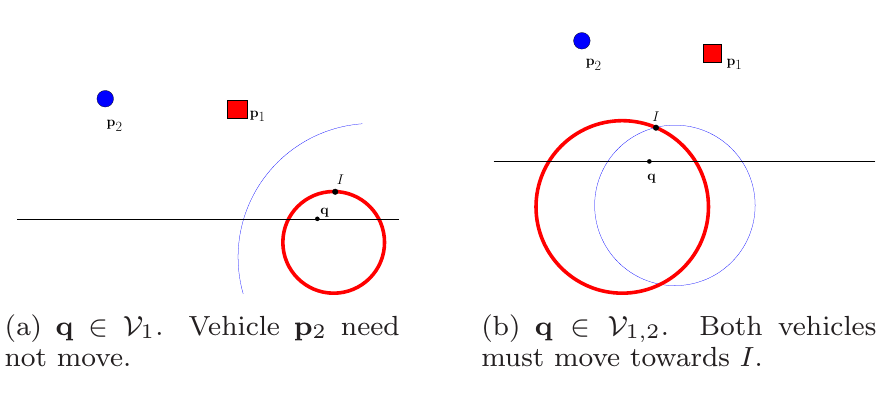}
\caption{Generator partitioning to intercept an adversarial target.}
\label{fig:adv}
\end{figure}
The optimal pursuit strategy is to move $\p_i$ if $\q \in \VV_i$, for
some $i\in\{1,2\}$, and to move both $\p_1$ and $\p_2$ if $\q \in
\VV_{1,2}$. The average cost $\subscr{C}{exp}(\p_1,\p_2)$ can be
written as
\begin{equation*}%\label{eq:adv}
\subscr{C}{exp} = \sum_{i=1}^2\int_{\VV_i}C(\p_i,x)\phi(x)dx + \int_{\VV_{1,2}}C(\p_1,\p_2,x)\phi(x)dx.
\end{equation*}

The gradient of the first term is similar to that in
Lemma~\ref{lem:grad_exptime}. But, the second term
$C(\p_1,\p_2,x)$ is difficult to characterize in the form of an
analytical expression. Thus, the adversarial case is a
challenging future direction.

\section{Conclusions and Future Directions}
 We addressed the problem of optimally placing vehicles having simple
 motion in order to intercept a mobile target that arrives
 stochastically on a line segment. For a single vehicle, we determined
 unique optimal placements when target motion was either constrained,
 i.e., with fixed speed and direction, or adversarial. For the
 multiple vehicle scenario and with constrained motion targets, we
 characterized conditions under which a partition and gradient based
 algorithm takes the vehicles asymptotically to the set of critical
 points of the cost function.

 A future direction is to consider numerical methods for the
 multiple vehicles with adversarial targets. Another direction is to
 consider stochastic target motion.

%\bibliographystyle{elsarticle-harv}
%\bibliography{alias,Main,FB}

% \clearpage
\section*{Appendix}

In this Appendix, we provide complete proofs of
Lemmas~\ref{lem:cvx},~\ref{lem:grad_exptime} and~\ref{lem:unstable}.

\medskip 

\noindent\hspace{0em}{\itshape Proof of Lemma~\ref{lem:cvx}:}
  The first claim follows by verifying that the Hessian matrix of $C$
  with respect to $X$ and $Y$ is positive semi-definite in the domain
  ${]0,W[}\times\real_{>0}$.

For the second claim, we need to show existence and uniqueness of a
minimizer in the domain ${]0,W[}\times\real_{>0}$.

1. Existence: We show that a minimizer cannot lie on the boundary or
outside of the domain $[0,W]\times\real_{\geq 0}$. We begin by showing
that $Y^*$ exists and is finite. Taking the limit of
$\subscr{C}{exp}(X,Y)$ as $Y\to +\infty$,
\[
\liminf_{Y\to+\infty} \subscr{C}{exp}(X,Y) \geq \liminf_{Y\to+\infty}
(a-c)Y\int_0^W \phi(x)dx = +\infty,
\]
since by assumption, $a>c$. Thus, $Y^*$ exists and is finite.

Finally, to show that a minimizer lies in ${]0,W[}\times\real_{> 0}$, we
need to prove two statements: (a) $Y^*\neq 0$, and (b) $X^*\in {]0,W[}$. To
show (a), Eq.~\eqref{eq:party} along with the assumption $\phi(x)\leq M$,
for every $x\in [0,W]$, yields
\begin{align*}
\frac{\partial \subscr{C}{exp}}{\partial Y} &\leq
MYa\int_0^W\frac{dx}{\sqrt{b(X-x)^2 + Y^2}}
-c \\
&\leq
\frac{MYa}{\sqrt{b}}(\log(W+\sqrt{W^2+Y^2/b}) - \log(Y/\sqrt{b})) -c.
\end{align*}
Thus, $\limsup_{Y\to 0^+}{\partial \subscr{C}{exp}}/{\partial Y} \leq
-c$. Thus, for $Y$ near zero, the gradient of
$\subscr{C}{exp}$ points in the negative $Y$-direction, implying that
$Y^*\neq 0$. 

To show (b), we first observe that for a given $Y$, in the limit as $X
\to \pm\infty$, $\subscr{C}{exp} \to +\infty$, and therefore $X^*$
must be bounded. Finally, the claim follows since the partial
derivative of $\subscr{C}{exp}$ with respect to $X$ is strictly
negative for $X\leq 0$ and is strictly positive for $X\geq W$.

Facts (a) and (b) coupled with convexity of $\subscr{C}{exp}$ with
respect to $X$ and $Y$ establish the existence part.

2. Uniqueness: Let there be two locations $(X_1,Y_1)$ and $(X_2,Y_2)$ that minimize
the expected cost. Since the
expected cost $\subscr{C}{exp}$ is convex in $X$ and $Y$, a convex
combination of $(X_1,Y_1)$ and $(X_2,Y_2)$ also minimizes
$\subscr{C}{exp}$. Thus, the necessary conditions for minimum are
satisfied by \\
$(\bar{X}(\alpha),\bar{Y}(\alpha)):=(\alpha X_1+(1-\alpha)X_2,\alpha
Y_1+(1-\alpha)Y_2)$, for every $\alpha \in [0,1]$. Thus,
\begin{align*}
\int_0^W\frac{(\bar{X}(\alpha)-x)\phi(x)}{\sqrt{(b\bar{X}(\alpha)-x)^2+\bar{Y}(\alpha)^{2}}}dx &= 0,\\
\int_0^W\frac{\bar{Y}(\alpha)\phi(x)}{\sqrt{b(\bar{X}(\alpha)-x)^2+\bar{Y}(\alpha)^{2}}}dx &= \frac{c}{a}.%\int_0^W \phi(x)dx.
\end{align*}
Since the above conditions hold for every $\alpha \in
[0,1]$, the partial derivatives of the above conditions evaluated at
$\alpha = 0$, must equal zero, which yields
\begin{align*}
\int_0^W \frac{(X_2-x)Y_2(Y_1-Y_2) - Y_2^2(X_1-X_2)}{(b(X_2-x)^2+Y_2^{2})^{3/2}}\phi(x)dx = 0,\\
\int_0^W \frac{(X_2-x)Y_2(X_1-X_2) -
  (Y_1-Y_2)(X_2-x)^2}{(b(X_2-x)^2+Y_2^{2})^{3/2}}\phi(x)dx \\ = 0,
\end{align*}
where $\phi(x)/(b(X_2-x)^2+Y_2^{2})^{3/2}=:f(X_2,Y_2,x)$ is strictly positive for $Y_2>0$. Multiplying the first equation by
$(X_1-X_2)$, the second by $(Y_1-Y_2)$, and adding the 
equations, 
\[
\int_0^W f(X_2,Y_2,x)(Y_2(X_1-X_2) - (X_2-x)(Y_1-Y_2))^2dx = 0.
\]
Since $f(X_2,Y_2,x) \geq 0$, we must have $Y_2(X_1-X_2) -
(X_2-x)(Y_1-Y_2) = 0$, \emph{for every} $x$ at which
$f(X_2,Y_2,x)>0$, which is feasible only if $X_1-X_2 = 0$ and $Y_1-Y_2
= 0$.

Parts 1 and 2 complete the proof for the second claim.
\hspace*{\fill}~\QED\par\endtrivlist\unskip

\medskip

\noindent\hspace{0em}{\itshape Proof of Lemma~\ref{lem:grad_exptime}:} 
 Let $\p_j$ be termed as a \emph{neighbor} of $\p_i$, i.e., $j\in$
  neigh$(i)$, if $\VV_i\cap\VV_j$ is non-empty. Then,
%\begin{multline*}
\[
\frac{\partial \subscr{T}{exp}}{\partial \p_i} = \frac{\partial}{\partial \p_i}\int_{\VV_i}T(\p_i,x)\phi(x)dx + \sum_{j\text{ neigh }(i)}\frac{\partial}{\partial \p_i}\int_{\VV_j}T(\p_j,x)\phi(x) dx,
\]
%\end{multline*}
Now, let $\VV_i = \bigcup_{l=1,\dots,n_i}[a_{l},b_{l}]$, for some
finite integer $n_i$. Then, there are two cases:

1. \emph{Every boundary point in the interior of $[0,W]$ belong to the
  dominance region of exactly two vehicles:} In this case, all
boundary points $a_l$ and $b_l$ are differentiable with respect to
$\p_i$. Therefore, by Leibnitz's Rule\footnote{Leibnitz's
  Rule: \begin{equation*}\frac{\partial}{\partial z}\int_{a(z)}^{b(z)}
    f(z,x)dx = \int_{a(z)}^{b(z)} \frac{\partial f(z,x)}{\partial z}dx
    + f(z,b)\frac{\partial b(z)}{\partial z} - f(z,a)\frac{\partial
      a(z)}{\partial z}.\end{equation*}},
%\begin{multline*}
\[
\frac{\partial}{\partial \p_i}\int_{\VV_i}T(\p_i,x)\phi(x)dx = \int_{\VV_i}\frac{\partial T}{\partial \p_i}(\p_i,x)\phi(x)dx 
+ \sum_{l=1}^{n_i} T(\p_i,b_{l})\frac{\partial b_{l}}{\partial \p_i} - T(\p_i,a_{l})\frac{\partial a_{l}}{\partial \p_i}.
\]
%\end{multline*}
Unless $a_1 = 0$, or $b_{n_i}=W$ (in which case the partial
derivatives with respect to $\p_i$ are zero), for every
$l\in\{1,\dots,n_i\}$, there exist some $j\in$ neigh$(i)$ and some
$k\in$ neigh$(i)$, such that
\begin{align}\label{eq:db}
\frac{\partial}{\partial \p_i}\int_{\VV_j}T(\p_j,x)\phi(x) dx &= -T(\p_j,b_l)\frac{\partial b_{l}}{\partial \p_i}, \\
\text{and, }\frac{\partial}{\partial \p_i}\int_{\VV_k}T(\p_k,x)\phi(x) dx &=
T(\p_k,a_l)\frac{\partial a_{l}}{\partial \p_i}, \nonumber
\end{align}
where we have made use of Leibnitz's Rule. Due to the continuity of $T$ at the boundary points, we obtain
\[
T(\p_j,b_l) = T(\p_i,b_l), \quad T(\p_k,a_l) = T(\p_i,a_l),
\]
and on summation,
%\begin{multline*}
\[
 \sum_{j \in \text{ neigh }(i)}\frac{\partial}{\partial \p_i}\int_{\VV_j}T(\p_j,x)\phi(x) dx
+ \sum_{l=1}^{n_i} T(\p_i,b_{l})\frac{\partial b_{l}}{\partial \p_i} - T(\p_i,a_{l})\frac{\partial a_{l}}{\partial \p_i} = 0.
\]
%\end{multline*}
The proof is complete for this case.

2. \emph{Some $b_l$ (or $a_l$) belongs to the dominance regions of
  $\p_i$ and at least two other vehicles:} Let $\p_j$ and $\p_r$ be
two of these vehicles. We perturb the position of $\p_i$ in a
direction $v$ by a small distance $\epsilon$. We claim that the
boundary term $T(\p_i,b_l)\partial b_l/\partial \p_i$ is cancelled
independent of the choice of the direction $v$. The following two
possibilities arise (cf.~Figure~\ref{fig:gradient}): either the point
$b_l$ moves to the right or $b_l$ moves to the left by some distance
$\delta(\epsilon)$. The steps for the former possibility are exactly identical
to Case 1.  In the latter possibility, we can write
Eq.~\eqref{eq:db}, if the interval $[b_l-\delta,b_l]$ belongs to
$\VV_j(\p_1,\dots,\p_i+\epsilon v,\dots,\p_m)$, or we can write
Eq.~\eqref{eq:db} with $j$ replaced by $r$, if the interval
$[b_l-\delta,b_l]$ belongs to $\VV_r(\p_1,\dots,\p_i+\epsilon
v,\dots,\p_m)$. Thus, in both of these possibilities, the steps from case 1
apply leading to the cancellation of all the boundary terms.

This completes the proof.
\hspace*{\fill}~\QED\par\endtrivlist\unskip

\begin{figure}[!h]
\centering
\includegraphics[width=0.7\linewidth]{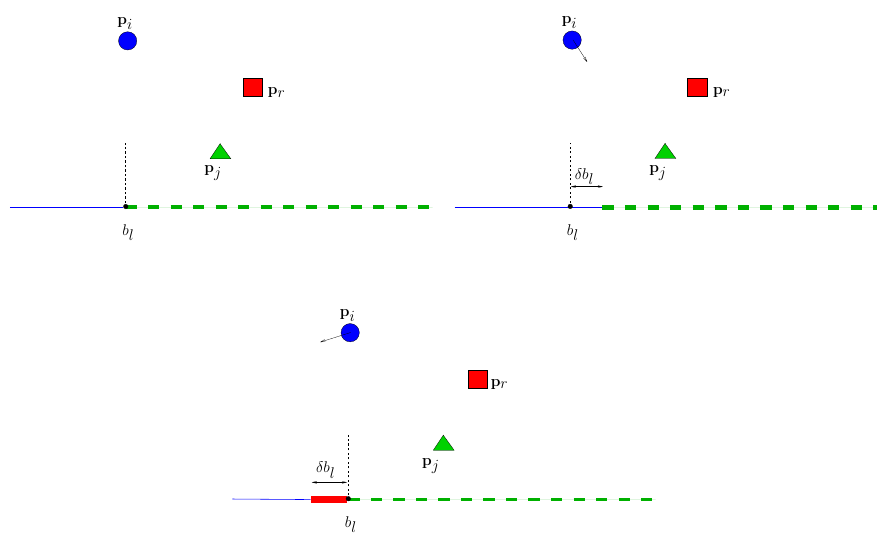}
\caption{Illustration of case 2 in the proof of
  Lemma~\ref{lem:grad_exptime}. In the first figure, the point $b_l$ belongs to the
  dominance regions of $\p_i$, $\p_j$ and $\p_r$. The second
  and third figures show how the dominance regions would change
  when $\p_i$ is perturbed in different directions. The
  thick solid, the
  dashed and the thin solid dominance regions belong to the square (red), the
  triangle (green) and the round (blue) vehicles, respectively.}
\label{fig:gradient}
\end{figure}

\medskip

\noindent\hspace{0em}{\itshape Proof of Lemma~\ref{lem:unstable}:} 
We first prove the result for the case of two
  vehicles. Let $\{\p_1^*,\p_2^*\}$ be a critical dominance region
  configuration in which $\p_1$ has its dominance region
  disconnected. Let the dominance regions be 
\begin{align*}
\VV_1 = [A,B], \quad \VV_2 = [0,A]\union[B,W].
\end{align*}
Perturb $\p_1$ by a small distance $\delta X$in the positive $X$ direction,
as shown in Figure~\ref{fig:unstable}.
\begin{figure}[!h]
\centering 
\includegraphics[width=0.5\linewidth]{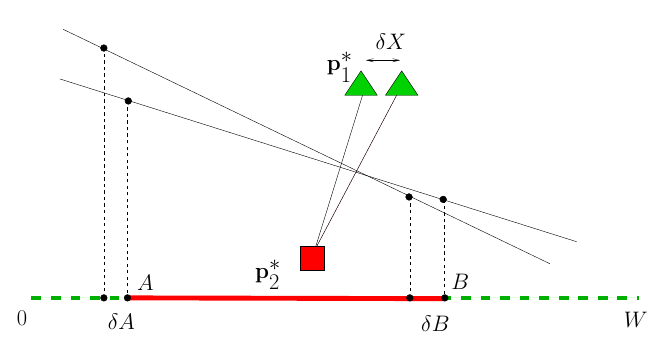}
\caption{Illustrating instability of critical dominance region
  configuration having a disconnected dominance region.} 
\label{fig:unstable}
\end{figure}
From continuity of the dominance regions, we have
\begin{align*}
\VV_1 = [A-\delta A,B-\delta B], \,
\VV_2 = [0,A-\delta A]\union[B-\delta B,W],
\end{align*}
where $\delta A$ and $\delta B$ are positive and sufficiently
small. Let $I_2(x,X_2, Y_2):=(X_2-x)\phi(x)/\sqrt{(1-v^2)(X_2-x)^2 +
    Y_2^2}$. Evaluating the partial derivatives in $X$ for $\p_2$,
\[
(1-v^2)\frac{\partial}{\partial X}T_2 = \int_{A-\delta A}^{B-\delta B}I_2dx,
\]
Upon further simplification,
\begin{align}\label{eq:unstable}
(1-v^2)\frac{\partial T_2}{\partial X} &= \int_{A- \delta A}^{A}I_2dx+ \int_{A}^{B}I_2dx - \int_{B-\delta B}^bI_2dx \nonumber \\
&= \int_{A-\delta A}^AI_2dx - \int_{B-\delta B}^{B}I_2dx,
\end{align}
as the sum of the first two terms is zero from the necessary condition
for $\p_2^*$ to be a minimum. For sufficiently small $\delta X$, there
exist $\delta A$ and $\delta B$ such that $I_2>0$ on the interval
$[0,A-\delta A]$ and $I_2<0$ on the interval $[B-\delta B,B]$, since
$X_2-x>0$ and $X_2-x<0$ respectively on the two intervals. Thus,
$\partial T_2/\partial X > 0$ when $\p_1$ is displaced to $(X_1+\delta
X,Y_1)$, which implies the direction of gradient descent in $X$ is the
negative $X$ direction for $\p_2$, and similarly is the positive $X$
direction for $\p_1$. Thus, the critical configuration is unstable.

In the case of $m>2$ vehicles, let the dominance regions of $\p_1$ and
$\p_2$ share at least one point. Since both terms on the right hand
side of Eq.~\eqref{eq:unstable} are positive, the direction of
gradient descent in $X$ is the negative direction for $\p_2$
independent of the fact whether the dominance regions of $\p_1$ and
$\p_2$ share one or two common points.
\hspace*{\fill}~\QED\par\endtrivlist\unskip

\end{document}